\DeclareMathOperator{\bbN}{\mathbb{N}}
\DeclareMathOperator{\bbP}{\mathbb{P}}
\DeclareMathOperator{\bbR}{\mathbb{R}}
\DeclareMathOperator{\bbZ}{\mathbb{Z}}
\DeclareMathOperator{\bfP}{\mathbf{P}}
\DeclareMathOperator{\bfQ}{\mathbf{Q}}
\DeclareMathOperator{\bfa}{\mathbf{a}}
\DeclareMathOperator{\bfb}{\mathbf{b}}
\DeclareMathOperator{\bfi}{\mathbf{i}}
\DeclareMathOperator{\sT}{\mathcal{T}}
\DeclareMathOperator{\one}{\mathbf{1}}
\DeclareMathOperator{\E}{E}
\DeclareMathOperator{\lc}{\lceil}
\DeclareMathOperator{\rc}{\rceil}
\DeclareMathOperator{\lf}{\lfloor}
\DeclareMathOperator{\rf}{\rfloor}
\newtheorem{thm}{Theorem}[section]
\newtheorem{prop}[thm]{Proposition}
\newtheorem{lem}[thm]{Lemma}
\newtheorem{cor}[thm]{Corollary}
\newcommand{\ubar}[1]{\underaccent{\bar}{#1}}
\numberwithin{equation}{section}
\numberwithin{figure}{section}
\title{Limit shapes for inhomogeneous corner growth models with exponential and geometric weights}
\author{Elnur Emrah\thanks{Department of Mathematics, University of Wisconsin-Madison. Madison, WI. USA. E-mail: \href{mailto:emrah@math.wisc.edu}{emrah@math.wisc.edu}}}
\begin{document}

\maketitle 
\abstract{
We generalize the exactly solvable corner growth models by choosing the rate of the exponential distribution $a_i+b_j$ and the parameter of the geometric distribution $a_i b_j$ at site $(i, j)$, where $(a_i)_{i \ge 1}$ and $(b_j)_{j \ge 1}$ are jointly ergodic random sequences. We identify the shape function in terms of a simple variational problem, which can be solved explicitly in some special cases. 
}
%\thispagestyle{empty}
%\newpage

%\tableofcontents
%\newpage
%\setcounter{page}{1}

\section{Introduction}

The corner growth model is a frequently studied model of random growth. See \cite{Seppalainen} for a detailed introduction, and \cite{Martin}, \cite{Seppalainen4} for an overview of related research. The model describes a cluster of sites that emerges from the corner of a quadrant and grows over time. We represent the quadrant with $\bbN^2$ and the cluster with a family of subsets $S_t \subset \bbN^2$ indexed by time $t \ge 0$. Each site $(i, j) \in \bbN^2$ waits for 
$(i-1, j)$ if $i > 1$ and $(i, j-1)$ if $j > 1$ to be in the cluster and, after an additional waiting time of $W(i, j)$, joins the cluster permanently at time $G(i, j)$. More precisely, $S_t = \{(i, j) \in \bbN^2: G(i, j) \le t\}$ for $t \ge 0$ and 
\begin{equation}
\label{e2}
G(i, j) = G(i-1, j) \vee G(i, j-1) + W(i, j) \quad \text{ for } i, j \in \bbN, 
\end{equation}
where $G(i, 0) = G(0, j) = 0$ for $i, j \in \bbN$. As first observed in \cite{Muth}, the preceding recursion implies
\begin{equation}\label{eq51}G(m, n) = \max_{\pi \in \Pi_{1, 1, m, n}} \sum \limits_{(i, j) \in \pi} W(i, j) \quad \text{ for } m, n \in \bbN, \end{equation}
where $\Pi_{u, v, u', v'}$ is the set of all directed paths from $(u, v)$ to $(u', v')$, that is, all finite sequences $\pi = ((i_k, j_k))_{1 \le k \le l}$ in $\bbZ^2$ such that $(i_{k+1}-i_k, j_{k+1}-j_k) \in \{(1, 0), (0, 1)\}$ for $1 \le k < l$, $(i_1, j_1) = (u, v)$ and $(i_l, j_l) = (u', v')$. We will refer to the random quantities $\{W(i, j): i, j \in \bbN\}$ and $\{G(i, j): i, j \in \bbN\}$ as \emph{weights} and \emph{last-passage times}, respectively, because of connection (\ref{eq51}) with directed last-passage percolation. The problem is typically to understand the statistical properties of the last-passage times given the joint probability distribution $P$ of the weights.

Consider the corner growth model in which the weights are i.i.d. and exponentially distributed with rate $\lambda > 0$ i.e. $P(W(i, j) \ge x) = e^{-\lambda x}$ for  $i, j \in \bbN$ and $x \ge 0$. We will refer to this special case as the \emph{exponential model}. In a seminal paper \cite{Rost}, H. Rost observed the equivalence of the exponential model with the totally asymmetric simple exclusion process (TASEP) and proved that  
\begin{align}
\label{E1}
\lim \limits_{n \rightarrow \infty} \frac{G(\lf ns \rf, \lf nt \rf)}{n} = \frac{(\sqrt{s}+\sqrt{t})^2}{\lambda} \quad \text{ for } s, t > 0 \quad P\text{-a.s.} 
\end{align}
He also interpreted (\ref{E1}) as an asymptotic shape result in the sense that 
the rescaled cluster $S_n/n$ converges as $n \rightarrow \infty$ to the parabolic region $\{s,t \in \bbR_+: \sqrt{s}+\sqrt{t} \le \sqrt{\lambda}\}$ in the Hausdorff metric. 
The discrete counterpart of the exponential model is the \emph{geometric model} in which the weights are i.i.d. and geometrically distributed with (fail) parameter $q \in (0, 1)$ i.e. $P(W(i, j) \ge k) = q^k$ for $i, j \in \bbN$ and $k \in \bbZ_+$.  For the geometric model, one can also compute that 
\begin{align}
\label{E2}
\lim \limits_{n \rightarrow \infty} \frac{G(\lf ns \rf, \lf nt \rf)}{n} = \frac{q}{1-q}(s+t) + \frac{2\sqrt{q}}{1-q} \sqrt{st} \quad \text{ for } s , t > 0 \quad P\text{-a.s., }
\end{align}
\cite{CohnElkiesPropp}, \cite{JockuschProppShor}, \cite{Seppalainen2}. The existence of the deterministic (nonrandom) a.s. limits in (\ref{E1}) and (\ref{E2}) follows from standard subadditive arguments, which hold in greater generality, for example, when $P$ is an arbitrary i.i.d. measure. Define the \emph{shape function} $g$ by $g(s, t) = \lim_{n \rightarrow \infty}n^{-1}G(\lf ns \rf, \lf nt \rf)$ for $s, t > 0$ $P$-a.s.  
%\begin{align*}
%g(s, t) = \lim_{n \rightarrow \infty} \frac{G(\lf ns \rf, \lf nt \rf)}{n} \quad \text{ for } s, t > 0 \quad P\text{-a.s.}
%\end{align*} 
The basic properties of $g$ such as concavity and homogeneity are also easily derived \cite[Theorem~2.1]{Seppalainen}. An interesting matter is the explicit identification of $g$. Despite much effort, this has not been possible except for the exponential and geometric models, which are called \emph{exactly solvable} cases. In this paper, we introduce certain corner growth models with non i.i.d $P$ whose shape functions exist and can be determined explicitly. 

We study \emph{inhomogeneous} generalizations of the exponential and geometric models in which the parameters $\lambda$ and $q$ are site-dependent and drawn randomly from an ergodic distribution. More specifically, let $\bfa = (a_n)_{n \in \bbN}$ and $\bfb = (b_n)_{n \in \bbN}$ be stationary random sequences in $(0, \infty)$ such that the distribution $\mu$ of $(\bfa, \bfb)$ is separately ergodic under the map $\tau_k \times \tau_l$ for each $k, l \in \bbN$, where $\tau_k$ is the shift $(c_n)_{n \in \bbN} \mapsto (c_{n+k})_{n \in \bbN}$ for $k \in \bbZ_+$. In particular, $\bfa$ and $\bfb$ can be independent i.i.d. sequences. Suppose that, given $(\bfa, \bfb)$, the weights are conditionally independent and $W(i, j)$ is exponentially distributed with rate $\lambda = a_i+b_j$ for $i, j \in \bbN$. The inhomogeneous geometric model is defined similarly except now $\bfa$ and $\bfb$ are sequences in $(0, 1)$ and, given $(\bfa, \bfb)$, $W(i, j)$ is geometrically distributed with parameter $q = a_i b_j$ for $i, j \in \bbN$. Let us write $\bbP$ for the joint distribution of the weights and $\bfP_{\bfa, \bfb}$ for the joint conditional distribution of the weights given $(\bfa, \bfb)$. To be clear, $\bfP_{\bfa, \bfb}$ is a product measure on $\bbR_+^{\bbN^2}$ and $\bbP(B) = \int \bfP_{\bfa, \bfb}(B) \mu(d\bfa, d\bfb)$ for any Borel set $B \subset \bbR_+^{\bbN^2}$. Identifying $W(i, j)$ with the projection $\bbR_+^{\bbN^2} \rightarrow \bbR_+$ onto coordinate $(i, j)$,
\begin{align*}
\bfP_{\bfa, \bfb}(W(i, j) \ge x) = e^{-(a_i+b_j)x} \quad \text{ for } i, j \in \bbN \text{ and } x \ge 0 %\label{E4}
\end{align*} 
for the exponential model and 
\begin{align*}
\bfP_{\bfa, \bfb}(W(i, j) \ge k) = a_i^k b_j^k \quad \text{ for } i, j \in \bbN \text{ and } k \in \bbZ_+%\label{E5}
\end{align*} for the geometric model. A noteworthy feature that distinguishes these models from the classical homogeneous counterparts is the correlations of the weights along the rows and the columns, that is, $W(i, j)$ and $W(i', j')$ are not independent under $\bbP$ if $i = i'$ or $j = j'$.

Our main result is a simple variational description of the shape function. 
To state it, let $\alpha$ and $\beta$ denote the distributions of $a = a_1$ and $b = b_1$, respectively. For the exponential model, 
\begin{align}
g(s, t) = \inf \limits_{z \in [-\ubar{\alpha}, \ubar{\beta}]} \left\{s \int_0^\infty \frac{\alpha(dx)}{x+z} + t \int_0^\infty \frac{\beta(dx)}{x-z}\right\} \quad \text{ for } s, t > 0, \label{E6}
\end{align}
where $\ubar{\eta}$ is the left endpoint of the support (the complement of the largest open $\eta$-null set) of a Borel measure $\eta$ on $\bbR$. (We will also use $\bar{\eta}$ for the right endpoint of the support). When $\alpha$ and $\beta$ are uniform measures, we can compute $g$ explicitly. For example, if the supports of $\alpha$ and $\beta$ are the interval $[1/2, 3/2]$ then, for $s, t > 0$,    
\begin{align}
g(s, t) = s \log\bigg(2 + \frac{t-s+\sqrt{(t-s)^2+16st}}{4s}\bigg) + t \log\bigg(2 + \frac{s-t+\sqrt{(s-t)^2+16st}}{4t}\bigg).
\label{E7}
\end{align}
We obtain similar results for the geometric model in which explicit formulas arise when $\alpha$ and $\beta$ have densities proportional to $x \mapsto 1/x$. We deduce from (\ref{E6}) that $g$ is linear or infinite if $\ubar{\alpha}=\ubar{\beta} = 0$, and is differentiable if $\ubar{\alpha}+\ubar{\beta} > 0$. In the latter case, in contrast with (\ref{E1}), $g$ may be linear close to the axes depending on the behaviors of $\alpha$ and $\beta$ near $\ubar{\alpha}$ and $\ubar{\beta}$. More precisely, there exist constants $0 \le c_1 < c_2 \le \infty$ such that $g$ is strictly concave only inside the cone $c_ 1 < s/t < c_2$, see Corollary \ref{c3}. This is illustrated through the level set $g = 1$ in Figure \ref{F1} below. We expect the finer statistics of $G(\lf ns \rf, \lf nt \rf)$ as $n \rightarrow \infty$ to be qualitatively different in the linear and concave sectors. This has been confirmed for large deviation properties in \cite{EmrahJanjigian}. A project currently underway is to understand the limit fluctuations. 

%\begin{figure}[h!]
%\centering
%\begin{tikzpicture}[scale = 0.9]
%\draw[<->](-1, 5.5)node[above]{$t$}--(-1, -1)--(5.5, -1)node[right]{$s$};
%\draw (-1, -1)node[below]{$0$};
%\draw[blue] (-1, -1) -- (1, 5); 
%\draw[blue] (1, 5)node[right]{\large $s/t = c_1$};
%\draw[blue] (-1, -1) -- (5, 2); 
%\draw[blue] (5, 2)node[right]{\large $s/t = c_2$};
%\draw [red, very thick] (0, 2) -- (-1, 5); 
%\draw [red, very thick] (1.55, 0.275) -- (4, -1); 
%\draw [red, very thick](0,2) arc (-160:-123:4 and 3.5); 
%\draw [red, very thick](1.5, 1.5)node[below]{\large $g = 1$};
%\end{tikzpicture}
%\caption{An illustration of the level curve $g = 1$ when $0 < c_1 < c_2 < \infty$. $g$ is strictly concave inside the cone $c_1 < s/t < c_2$ and is linear outside.}
%\label{F1} %The label must be after the caption. 
%\end{figure}

\begin{figure}[h!]
\centering
\includegraphics[scale=0.6]{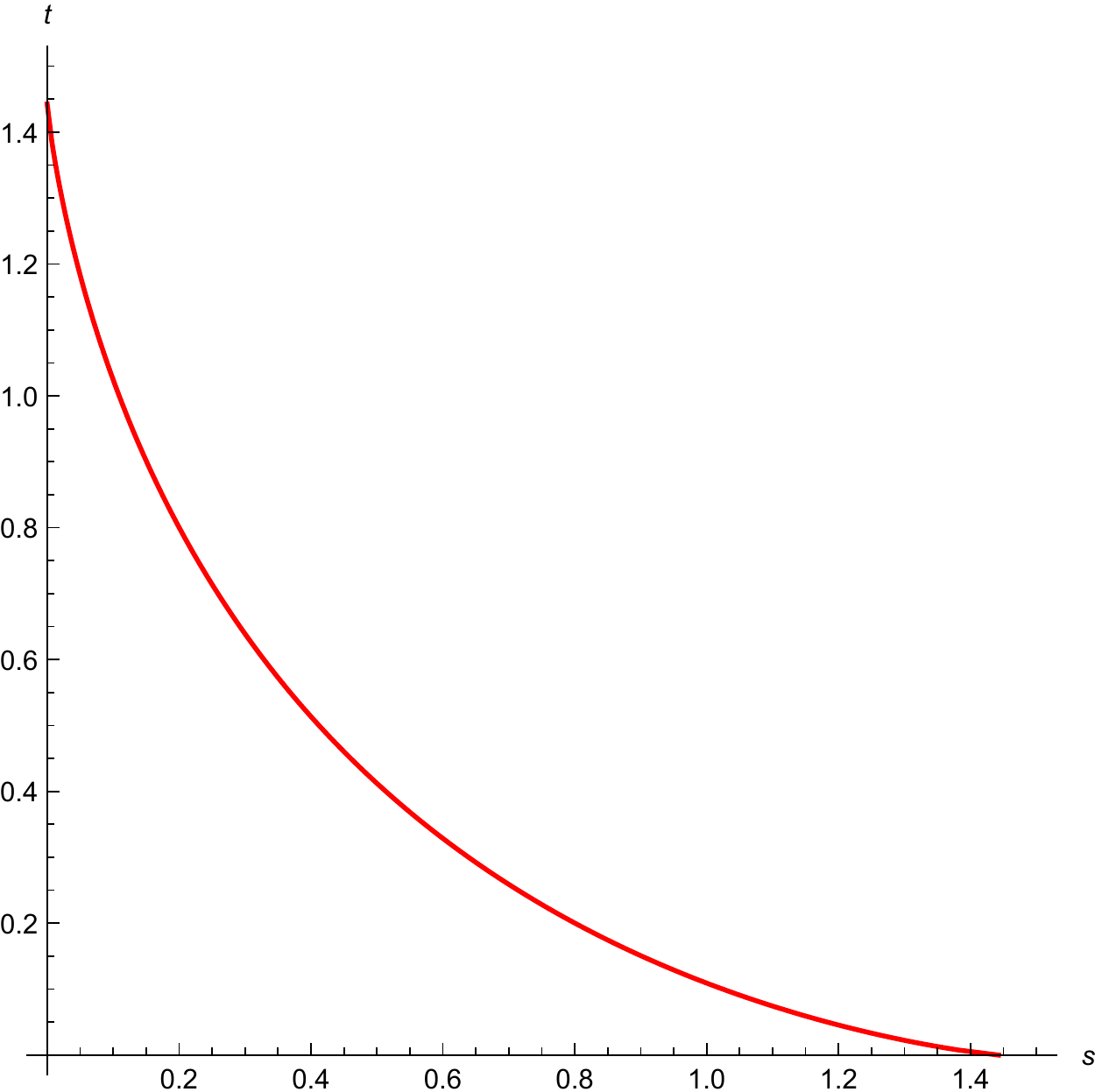}
\includegraphics[scale=0.6]{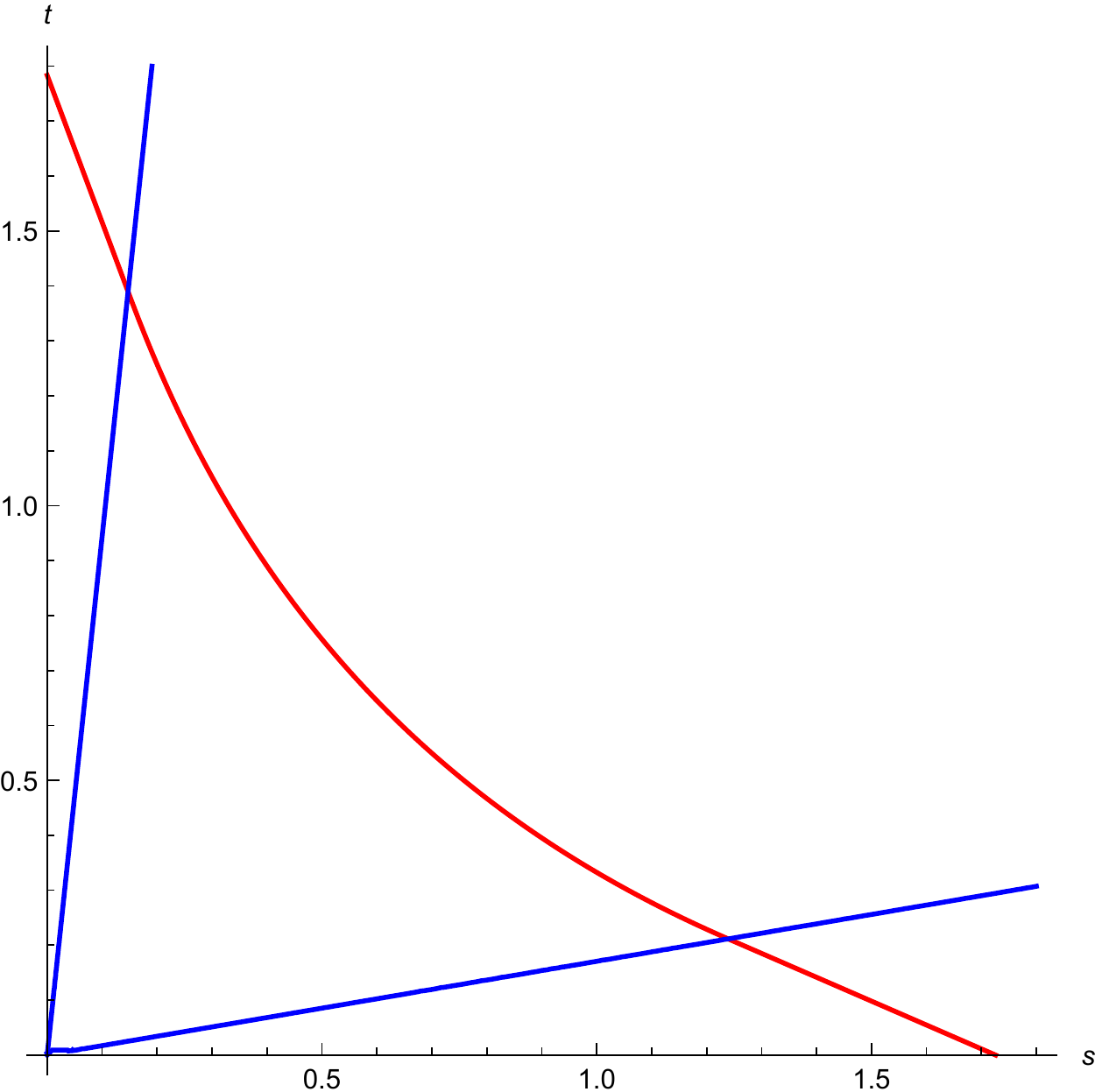}
%\begin{overpic}[scale=0.5]{LevelCurve.pdf}
%\includegraphics[scale=0.9]{cgm_shape_plots_gr1.pdf}
%\put(101, 3){\large $x$}
%\put(3, 101){\large $y$}
%\put(35, 35){\large {\color{blue}$(\sqrt{x}+\sqrt{y})^2 = 1/m$}}
%\put(25, 25){\large {\color{red}$g = 1$}}
%\end{overpic}
%\begin{overpic}{scale=0.5}{LevelCurve2.pdf}
%\end{overpic}
\caption{The plot of $g = 1$ for (\ref{E7}) where $c_1 = 0$ and $c_2 = \infty$ (left). The plot of $g = 1$ and the rays $s/t = c_1 = (-8+12\log2)/3 \approx 0.105922$ and $s/t = c_2 = 4/(9-12\log2) \approx 5.863092$ for (\ref{E6}) when $\alpha(dx) = \one_{\{0 \le x \le 1\}}3x^2dx$ and $\beta(dx) = \one_{\{1 \le x \le 2\}} 4(x-1)^3dx$ (right).}
\label{F1}  
\end{figure}

A short discussion of the technical aspects of the paper is in order. To calculate the shape function, we rely on certain stationary processes with explicit product-form distributions. This approach dates back to \cite{Seppalainen2} and is illustrated in \cite{Seppalainen} to derive (\ref{E2}), which we briefly outline. Introduce a parameter $z \in (q, 1/q)$ and boundary weights $\{W(k, 0), W(0, k): k \in \bbZ_+\}$ such that $\{W(i, j): i, j \in \bbZ_+\}$ are independent, $W(0, 0) = 0$, and $W(k, 0)$ and $W(0, k)$ are geometrically distributed with parameters $q/z$ and $qz$, respectively. Choose the boundary values in recursion (\ref{e2}) as $G(i, 0) = \sum_{k=1}^{i} W(k, 0)$ and $G(0, j) = \sum_{k=1}^{j} W(0, k)$ for $i, j \in \bbN$. Then the resulting model turns out to be \emph{stationary} in the sense that the distributions of the processes $\{G(i, n)-G(i-1, n): i \in \bbN\}$ and $\{G(n, j)-G(n, j-1): j \in \bbN\}$ do not depend on $n$. Consequently, these distributions are product measures with geometric marginals. This allows computing the shape function $g_z$ of the stationary model and relating it to $g$ via
\begin{equation}\label{E8}g_z(1, 1) = \sup_{t \in [0, 1]} \max \{g_z(1-t, 0) + g(t, 1), g_z(0, 1-t) + g(1, t)\} \end{equation}
for $z \in (q, 1/q)$. Then (\ref{E2}) can be extracted from (\ref{E8}). The main observation of the present work is that, given $(\bfa, \bfb)$, if we choose the parameters of $W(i, 0)$ and $W(0, j)$ as $a_i/z$ and $b_j z$ for $z \in (\bar{\alpha}, 1/\bar{\beta})$, we still obtain a stationary model. Then, adapting some arguments from \cite{Seppalainen}, we also arrive at (\ref{E8}). To identify $g$, \cite{Seppalainen} uses the symmetry $g(s, t) = g(t, s)$, which is not true for the inhomogeneous model unless $\alpha = \beta$. For this step, we develop an argument that removes the need for symmetry and makes only a few general assumptions on $g$ and $g_z$. 

\textbf{Literature review.}
We mention briefly some related results and conjectures beginning with the case of i.i.d. $P$. For the exponential and geometric models,  
\begin{align}
\label{E3}
g(s, t) = m(s+t)+2 \sqrt{\sigma^2 st}, 
\end{align} 
where $m$ and $\sigma^2$ are the common mean and the variance of the weights. Furthermore, $g$ satisfies (\ref{E3}) up to an error of order $o(\sqrt{t})$ as $t \downarrow 0$ provided that the weights have sufficiently light tail \cite{Martin2}. There are also known cases in which $g$ has linear segments. For example, if the weights are bounded by $1$ and $P(W(i, j) = 1) = p > p_c$, where $p_c$ is the critical probability for the oriented site percolation, then $g(s, t) = s+t$ in a nontrivial cone \cite{AuffingerDamron}, \cite{DurrettLiggett}, \cite{Marchand}. Nevertheless, it is expected that $g$ is strictly concave and differentiable for a large class of $P$, for instance, when the weights have continuous distributions with enough moments. In our setting, $g$ also enjoys these properties under some moment conditions but can be far from (\ref{E3}) as (\ref{E7}) exemplifies. More recently, variational formulas in terms of stationary, integrable cocycles have been developed for $g$ under the mild assumption that the weights have finite $2+\epsilon$ moment for some $\epsilon > 0$ \cite{GeorSeppRass}. Minimizers of these formulas are identified as Busemann functions in \cite{GeorSeppRass2} relying on some fixed point results from queueing theory \cite{MairessePrabhakar}. 

There has also been interest to identify the shape function for non-i.i.d. $P$. For the exponential model with columnwise inhomogeneity (i.i.d. $\bfa$ and constant $\bfb$), \cite{KrugSepp} obtained a variational description of $g$, which (\ref{E6}) includes as a special case. Asymptotics of $g$ near the axes are determined for more general $P$ in \cite{Lin}. Their Theorem 2.4 can be deduced from (\ref{E6}).  Another direction of generalizing the exponential and geometric models is to choose the parameters at site $(i, j)$ as $\lambda = \Lambda(i/n, j/n)$ and $q = Q(i/n, j/n)$ for some deterministic functions $\Lambda$ and $Q$ that encode inhomogeneity. Then, under suitable conditions, $g$ can be characterized as the unique monotone viscosity solution of a certain Hamilton-Jacobi equation \cite{Calder}.

While we will not take advantage of it in the present paper, we mention that exact solvability of the exponential and geometric models goes beyond the explicit limits (\ref{E1}) and (\ref{E2}). The distributions of the last-passage times can be expressed as a Fredholm determinant with an explicit kernel. Using this, \cite{Johansson} established that $G(\lf ns \rf, \lf nt \rf)$ has fluctuations of order $n^{1/3}$ and converges weakly, after suitable rescaling, to the Tracy-Widom GUE distribution. These features are characteristic of the conjectural Kardar-Parisi-Zhang (KPZ) universality class, see survey \cite{Corwin}. More generally, Fredholm determinant representations of the distribution of the last-passage times under $\bfP_{\bfa, \bfb}$ have been derived by relating $\bfP_{\bfa, \bfb}$ to the Schur measures introduced in \cite{Okounkov} and, thereby, to determinantal point processes with explicit correlation kernels \cite{BorodinPeche}, \cite{Johansson2}, \cite{Johansson4}, \cite{Johansson3}. %However, we will not take advantage of these representations in the present paper. 

\textbf{Outline.} Our results are formally stated in Section \ref{S3}. 
We sketch the existence and the basic properties of $g$ in Section \ref{S2}. We discuss stationary versions of the exponential and geometric models in Section \ref{S4}. We prove (\ref{E6}) in Section \ref{S5}. 

\textbf{Notation and conventions.} $\bbN = \{1, 2, 3, \dotsc\}$ and $\bbZ_+ = \{0, 1, 2, \dotsc\}$. For $x \in \bbR$, define $\lf x \rf = \max \{i \in \bbZ: i \le x\}$ and $\lc x \rc = \min \{i \in \bbZ: i \ge x\}$. Also, $x \vee y = \max\{x, y\}$ and $x \wedge y = \min \{x, y\}$ for $x, y \in \bbR$. The imaginary unit is denoted by $\bfi$. Adjectives \emph{increasing} and \emph{decreasing} are used in the strict sense. For convenience, we set $1/0 = \infty$ and $1/\infty = 0$. 

\textbf{Acknowledgement. } The author thanks Timo Sepp\"{a}l\"{a}inen for the problem of computing explicit limit shapes and his valuable comments during the preparation of this paper. 

\section{Results}\label{S3}

Let $\E$ denote the expectation under $\mu$ (the distribution of $(\bfa, \bfb)$). Recall that $a = a_1$ and $b = b_1$. It is convenient to break (\ref{E6}) into the next two theorems. 

\begin{thm}
\label{T1}
Suppose that $\ubar{\alpha} + \ubar{\beta} > 0$ in the exponential model. Then
\begin{equation}\label{eq3}
g(s, t) = \inf \limits_{z \in (-\ubar{\alpha}, \ubar{\beta})}\  \left\{s\E\left[\frac{1}{a+z}\right] + t\E\left[\frac{1}{b-z}\right]\right\} \quad \text{ for } s, t > 0. \end{equation}
\end{thm}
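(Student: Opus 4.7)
My plan is to follow the stationary-boundary approach outlined in the introduction, reserving the main new work for the final identification step, where the symmetry $g(s, t) = g(t, s)$ used in the i.i.d. proof of \cite{Seppalainen} is unavailable.

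I would first construct a stationary version of the inhomogeneous exponential model for each $z \in (-\ubar{\alpha}, \ubar{\beta})$. Extending the index set to $\bbZ_+^2$, I would attach boundary weights $W(i, 0) \sim \mathrm{Exp}(a_i + z)$ and $W(0, j) \sim \mathrm{Exp}(b_j - z)$ (conditionally independent given $(\bfa, \bfb)$ and independent of the bulk). The three-variable memoryless identity for exponentials carries over to the site-dependent rates $a_i + b_j$, giving distributional invariance of the row and column increment processes just as in the homogeneous case. Combining this with the law of large numbers and the separate ergodicity of $\mu$ produces
\[
g_z(s, t) = s F(z) + t G(z), \qquad F(z) := \E\bigl[1/(a+z)\bigr],\quad G(z) := \E\bigl[1/(b-z)\bigr].
\]
Decomposing an optimal stationary path from $(0, 0)$ to $(n, n)$ according to its exit site from the boundary and letting $n \to \infty$ then yields (\ref{E8}):
\[
F(z) + G(z) = \sup_{t \in [0, 1]} \max\bigl\{(1 - t)F(z) + g(t, 1),\ (1 - t)G(z) + g(1, t)\bigr\}.
\]

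The upper bound in (\ref{eq3}) is now immediate: dropping the supremum and rearranging gives $g(t, 1) \leq tF(z) + G(z)$ and $g(1, t) \leq F(z) + tG(z)$ for every $z$, and homogeneity $g(s, t) = t g(s/t, 1)$ produces $g(s, t) \leq h(s, t) := \inf_z \{sF(z) + tG(z)\}$. For the matching lower bound, which is where I would depart from \cite{Seppalainen}, I fix $z$ and observe that the supremum above is attained at some $t_z \in [0, 1]$ in one of the two branches. In the first-branch case, rearranging yields $g(t_z, 1) = t_z F(z) + G(z) \geq h(t_z, 1)$, which together with the upper bound forces $g(t_z, 1) = h(t_z, 1)$. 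The second branch gives $g(1, t_z) = h(1, t_z)$, equivalent by homogeneity to $g(1/t_z, 1) = h(1/t_z, 1)$ whenever $t_z > 0$. I would then argue that as $z$ sweeps $(-\ubar{\alpha}, \ubar{\beta})$, the union of the resulting $r$-values is dense in $(0, \infty)$, so joint continuity of the concave homogeneous functions $g$ and $h$ on the open positive cone upgrades the identity to all of $(0, \infty)^2$.

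The main obstacle is the density claim. Where $g(\cdot, 1)$ is strictly concave and differentiable, the first-branch optimizer is characterized by $\partial_s g(t_z, 1) = F(z)$, so the realized slopes are precisely the image of $(-\ubar{\alpha}, \ubar{\beta})$ under $F$ (and analogously for $G$ and the second branch). Characterizing this image requires tracking the limits of $F$ and $G$ as $z$ approaches the endpoints $-\ubar{\alpha}$ and $\ubar{\beta}$; the hypothesis $\ubar{\alpha} + \ubar{\beta} > 0$ is precisely what prevents simultaneous blow-up and keeps the argument interior. An additional subtlety is that, as noted after (\ref{E7}), $g$ may be linear near the axes, in which case the first-order condition is nonunique and the final identification must lean on continuity rather than on any strict-concavity surjectivity to cover the linear sectors.
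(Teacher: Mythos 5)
Your first half---the stationary boundary model with rates $a_i+z$ and $b_j-z$, the law of large numbers giving $g_z(s,t)=sA(z)+tB(z)$ with $A(z)=\E[(a+z)^{-1}]$, $B(z)=\E[(b-z)^{-1}]$, the exit-point decomposition yielding (\ref{E8.1}), and the upper bound $g\le\inf_z g_z$---is exactly the paper's route (Proposition \ref{p5}, Lemmas \ref{l6} and \ref{L4}, and the proposition containing (\ref{E8.1})). The genuine gap is in your lower-bound mechanism. For each $z$ you do get one ratio $r_z$ (namely $t_z$ or $1/t_z$) with $g(r_z,1)=g_z(r_z,1)=h(r_z,1)$, where $h$ denotes the right-hand side of (\ref{eq3}); but the set of such ratios cannot be dense in $(0,\infty)$ in general. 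If the theorem's conclusion holds, then on any sector where $h$ is linear with the extreme slope (the sectors $s/t\le c_1$ and $s/t\ge c_2$ of Corollary \ref{c3}) the first-branch objective $t\mapsto(1-t)A(z)+g(t,1)$ has derivative $A(-\ubar{\alpha})-A(z)>0$, so no optimizer lands in the open linear sectors; the equality points are confined to the closed cone $c_1\le s/t\le c_2$. Whenever $c_1>0$ or $c_2<\infty$ (precisely the situation in the right panel of Figure \ref{F1}), continuity therefore cannot propagate the identity $g=h$ into the linear sectors, and your own closing remark that the argument ``must lean on continuity'' there is exactly where the proof breaks. A second, related defect is circularity: you characterize the realized optimizers via $\partial_s g(t_z,1)=A(z)$ and strict concavity of $g$, but differentiability and the location of the strictly concave cone are consequences of the identification (Corollary \ref{c3}), not available beforehand; a priori you cannot rule out that the optimizers cluster on a small set.

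What is missing is an independent determination of the boundary values $g(1,0)=\E[(a+\ubar{\beta})^{-1}]$ and $g(0,1)=\E[(b+\ubar{\alpha})^{-1}]$, which the paper proves separately (Lemma \ref{l7}) by a stochastic-domination plus law-of-large-numbers argument along a thin rectangle. With those in hand the linear sectors can be closed off, for instance by superadditivity: for $s/t\le c_1$ write $(s,t)=(s,s/c_1)+(0,t-s/c_1)$ and use $g=h$ on the ray $s/t=c_1$ together with $g(0,1)=B(-\ubar{\alpha})$ to get the matching lower bound. The paper instead avoids tracking optimizers altogether: it converts (\ref{E8.1}) into the full conjugacy relation $B(z)=\sup_{s\ge 0}\{-sA(z)+g(s,1)\}$ (Lemma \ref{l8} and Corollary \ref{c2}, whose proof already needs Lemma \ref{l7} to kill the second supremum as $S\to\infty$), identifies $B\circ A^{-1}(-\cdot)$ with the convex conjugate of $\gamma(s)=-g(s,1)$ on the interval $(-A(-\ubar{\alpha}),-A(\ubar{\beta}))$, and recovers $\gamma$ by Fenchel--Moreau, with Lemma \ref{l7} again used to pin down $\gamma^*$ outside that interval so the biconjugate supremum collapses to $z\in(-\ubar{\alpha},\ubar{\beta})$. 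So your outline needs both the boundary lemma and a replacement for the density-plus-continuity step before it constitutes a proof.
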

Hence, $g$ depends on $(\bfa, \bfb)$ only through the marginal distributions $\alpha$ and $\beta$. Let us write $g^{\alpha, \beta}$ to indicate this. Replacing $z$ with $-z$ in (\ref{eq3}) reveals that $g^{\alpha, \beta}(s, t) = g^{\beta, \alpha}(t, s)$ for $s, t > 0$, which is expected due to the symmetric roles of $\bfa$ and $\bfb$ in the model. In particular, if $\alpha$ and $\beta$ are the same then $g(s, t) = g(t, s)$ for $s, t > 0$. Also, (by dominated convergence) the infimum can be taken over $[-\ubar{\alpha}, \ubar{\beta}]$ in (\ref{eq3}). When $\ubar{\alpha} = \ubar{\beta} = 0$, this interval degenerates to $\{0\}$ and we expect that $g(s, t) = s\E[1/a]+t\E[1/b]$ for $s, t > 0$. Indeed, this is true.  
\begin{thm}
\label{T2}
Suppose that $\ubar{\alpha} = \ubar{\beta} = 0$ in the exponential model. Then 
\begin{equation*}
g(s, t) = s\E\left[\frac{1}{a}\right] + t\E\left[\frac{1}{b}\right] \text{ for } s, t > 0. 
\end{equation*}
\end{thm}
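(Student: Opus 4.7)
The plan is to prove Theorem T2 by matching lower and upper bounds on $g(s,t)$, combining a perturbative reduction to Theorem T1 with the degenerate $z=0$ instance of the stationary model used to establish T1.

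For the lower bound, fix $\epsilon > 0$ and set $\bfa^\epsilon = (a_i+\epsilon)_{i\ge 1}$, $\bfb^\epsilon = (b_j+\epsilon)_{j\ge 1}$. Their joint distribution inherits separate ergodicity from $\mu$ and satisfies $\ubar{\alpha^\epsilon} = \ubar{\beta^\epsilon} = \epsilon > 0$, so Theorem T1 applies and yields
\[
g^\epsilon(s,t) = \inf_{z \in [-\epsilon,\epsilon]}\bigl\{s\,\E[1/(a+\epsilon+z)] + t\,\E[1/(b+\epsilon-z)]\bigr\}.
\]
Coupling both models through a common array of standard exponentials makes the perturbed weights pointwise dominated by the original ones (larger rate, smaller variable), so $g^\epsilon(s,t) \le g(s,t)$. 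Since $a+\epsilon+z \le a+2\epsilon$ and $b+\epsilon-z \le b+2\epsilon$ throughout $[-\epsilon,\epsilon]$, one also has $g^\epsilon(s,t) \ge s\,\E[1/(a+2\epsilon)] + t\,\E[1/(b+2\epsilon)]$, and letting $\epsilon \downarrow 0$ with monotone convergence gives $g(s,t) \ge s\,\E[1/a] + t\,\E[1/b]$.

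For the upper bound (trivial when either expectation is infinite, so assume both finite), I would augment the model with boundary weights $W(i,0) \sim \exp(a_i)$ and $W(0,j) \sim \exp(b_j)$ conditionally independent given $(\bfa,\bfb)$ and of the interior weights, and let $G^\star$ be the last-passage process extended into the quadrant by the recursion \eqref{e2} with $G^\star(i,0) = \sum_{k=1}^i W(k,0)$ and $G^\star(0,j) = \sum_{k=1}^j W(0,k)$. This is the $z=0$ instance of the one-parameter stationary construction from Section 4: although the open admissible interval $(-\ubar\alpha,\ubar\beta)$ is empty in the T2 regime, the boundary rates $a_i$ and $b_j$ remain $\mu$-a.s.\ positive, and the site-by-site Burke identity at $z=0$ (a direct Laplace-transform check conditional on $(\bfa,\bfb)$) propagates to show that for every $n \ge 0$ the horizontal increments $\bigl(G^\star(i,n)-G^\star(i-1,n)\bigr)_{i\ge 1}$ are conditionally independent $\exp(a_i)$ given $(\bfa,\bfb)$, with a symmetric statement for the columns. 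The pointwise domination $G(m,n) \le G^\star(m,n)$ (boundary weights are nonnegative) then reduces the task to showing $N^{-1}G^\star(\lfloor Ns\rfloor,\lfloor Nt\rfloor) \to s\,\E[1/a] + t\,\E[1/b]$ $\bbP$-a.s. From the decomposition
\[
G^\star(m,n) = G^\star(0,n) + \sum_{i=1}^m \bigl(G^\star(i,n)-G^\star(i-1,n)\bigr),
\]
each piece is, conditionally on $(\bfa,\bfb)$, a sum of independent exponentials with rates drawn from an ergodic stationary sequence, so the ergodic theorem applied to $(X_j/b_j)_{j\ge 1}$ and $(Y_i/a_i)_{i\ge 1}$ (with auxiliary iid standard exponentials $X_j, Y_i$) delivers the required a.s.\ limit under the integrability hypothesis.

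The main obstacle will be justifying the $z=0$ boundary case of the stationary construction in the T2 regime. The Burke identity itself only requires pointwise positivity of the rates, so the local step goes through once one conditions on $(\bfa,\bfb)$; what needs genuine verification is that the global stationarity of $G^\star$ across the whole quadrant (including the independence between transformed increments that is usually secured by working in the strict interior $z \in (-\ubar\alpha,\ubar\beta)$) survives the absence of a uniform lower bound on $a_i, b_j$, and that the ergodic step converting conditional distributions into the a.s.\ limit can be carried out using only the finiteness of $\E[1/a]$ and $\E[1/b]$.
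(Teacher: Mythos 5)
Your proposal follows essentially the same route as the paper. The lower bound is the paper's reduction to Theorem \ref{T1}: the paper perturbs by truncation ($a_i\vee\delta$, $b_j\vee\delta$) where you perturb by an additive shift ($a_i+\epsilon$, $b_j+\epsilon$); both preserve the separate ergodicity, both are compared to the original model by stochastic domination, and the limiting argument is the same (yours is marginally cleaner, needing one parameter instead of the paper's $\delta<\delta'$). The upper bound you plan to build is exactly the paper's $z=0$ stationary model: definition (\ref{eq78}) is explicitly made at $z=0$ when $\ubar{\alpha}=\ubar{\beta}=0$, and Proposition \ref{p5} and Lemma \ref{l6} require only the pointwise positivity $a_i,b_j>0$, so the ``main obstacle'' you flag (no uniform lower bound on the rates) is not an obstacle at all; the resulting inequality $g\le g_0$ is already recorded in the remark following Lemma \ref{L4}, and the paper's proof of Theorem \ref{T2} simply invokes it.

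The one step that would fail as written is your claim of an a.s.\ limit for $N^{-1}\widehat{G}(\lfloor Ns\rfloor,\lfloor Nt\rfloor)$ (your $G^\star$) via the ergodic theorem. The increments $\bigl(\widehat{G}(i,n)-\widehat{G}(i-1,n)\bigr)_{i\ge 1}$ at level $n=\lfloor Nt\rfloor$ form a triangular array: for each fixed $n$ they are distributed as independent $\exp(a_i)$ variables, but the random variables themselves change with $n$, so Birkhoff applied to a fixed auxiliary sequence $(Y_i/a_i)_{i\ge 1}$ controls a fixed level only, not the moving one. This is precisely why Lemma \ref{L4} asserts convergence only in $\bfP_{\bfa,\bfb}^z$-probability (the a.s.\ version is cited from elsewhere and only under $\ubar{\alpha}+\ubar{\beta}>0$). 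The flaw is harmless for your purpose: since the sum along the moving level has, for each $N$, the same law as the corresponding boundary sum (Proposition \ref{p5}), convergence in probability to the constant transfers, and combined with $G\le\widehat{G}$ and the $\bbP$-a.s.\ convergence of $G(\lfloor Ns\rfloor,\lfloor Nt\rfloor)/N$ (pass to an a.s.\ convergent subsequence) this still yields $g(s,t)\le s\E\left[1/a\right]+t\E\left[1/b\right]$. With that repair, your argument is correct and coincides with the paper's.
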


We turn to the concavity and differentiability properties of $g$. In the case $\ubar{\alpha}+\ubar{\beta} > 0$, define the critical values
$c_1 = \dfrac{\E[(b+\ubar{\alpha})^{-2}]}{\E[(a-\ubar{\alpha})^{-2}]}$ and $c_2 = \dfrac{\E[(b-\ubar{\beta})^{-2}]}{\E[(a+\ubar{\beta})^{-2}]}$.  
Note that $0 \le c_1 < c_2 \le \infty$. Also, $c_1 = 0$ if and only if $\E[(a-\ubar{\alpha})^{-2}] = \infty$, and $c_2 = \infty$ if and only if $\E[(b-\ubar{\beta})^{-2}] = \infty$.
\begin{cor}
\label{c3}
Suppose that $\ubar{\alpha} + \ubar{\beta} > 0$ in the exponential model. Then 
\begin{enumerate}[(a)] 
\item $g(s, t) = s\E[(a-\ubar{\alpha})^{-1}] + t\E[(b+\ubar{\alpha})^{-1}]$ for $s/t \le c_1$. 
\item $g(s, t) = s\E[(a+\ubar{\beta})^{-1}] + t\E[(b-\ubar{\beta})^{-1}]$ for $s/t \ge c_2$. 
\item $g(cs_1+(1-c)s_2, ct_1+(1-c)t_2) > cg(s_1, t_1) + (1-c)g(s_2, t_2)$ for $c \in (0, 1)$ and $s_1, s_2, t_1, t_2 > 0$ such that $c_1 < s_1/t_1, s_2/t_2 < c_2$ and $(s_1, t_1) \neq k (s_2, t_2)$ for any $k \in \bbR$. 
\item $g$ is continuously differentiable.
\end{enumerate}
\end{cor}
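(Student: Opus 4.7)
The plan is to reduce each of (a)--(d) to convex analysis of the single-variable function
\[
F_{s,t}(z) = s\,\E[(a+z)^{-1}] + t\,\E[(b-z)^{-1}], \qquad z\in[-\ubar{\alpha},\ubar{\beta}],
\]
which by Theorem \ref{T1} satisfies $g(s,t)=\inf_z F_{s,t}(z)$. Differentiating under the expectation (justified since the integrands are bounded on compacta of $(-\ubar{\alpha},\ubar{\beta})$), I would first record
\[
F_{s,t}'(z) = -s\,\E[(a+z)^{-2}] + t\,\E[(b-z)^{-2}], \qquad F_{s,t}''(z) = 2s\,\E[(a+z)^{-3}] + 2t\,\E[(b-z)^{-3}] > 0,
\]
so $F_{s,t}$ is strictly convex with a unique minimizer $z^{*}(s,t)\in[-\ubar{\alpha},\ubar{\beta}]$. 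Monotone convergence produces the endpoint limits of $F'_{s,t}$, and the sign conditions $F_{s,t}'(-\ubar{\alpha})\ge 0$ and $F_{s,t}'(\ubar{\beta})\le 0$ translate precisely to $s/t\le c_1$ and $s/t\ge c_2$. Parts (a) and (b) then fall out: in these regimes $z^{*}$ is pinned at the corresponding endpoint and evaluating $F_{s,t}$ there recovers the claimed linear formulas.

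For (c), $g$ is concave as a pointwise infimum of the affine maps $L_z(s,t)=s\,\E[(a+z)^{-1}]+t\,\E[(b-z)^{-1}]$. To upgrade to strict concavity on the open cone $c_1<s/t<c_2$, I would use that $z_i:=z^{*}(s_i,t_i)\in(-\ubar{\alpha},\ubar{\beta})$ obeys the first-order condition $s_i/t_i=\E[(b-z_i)^{-2}]/\E[(a+z_i)^{-2}]$; since this ratio is strictly increasing in $z$ (numerator increases in $z$, denominator decreases), the assignment $z\mapsto s/t$ is injective, so non-collinear $(s_1,t_1),(s_2,t_2)$ in the cone produce $z_1\ne z_2$. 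For any convex combination $(s,t)=c(s_1,t_1)+(1-c)(s_2,t_2)$ with minimizer $z$, at least one $z_i$ differs from $z$, and strict convexity of $F_{s_i,t_i}$ then gives $L_z(s_i,t_i)>g(s_i,t_i)$ for that index. Averaging and using $L_z(s,t)=g(s,t)$ yields the strict inequality claimed in (c).

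For (d), $g$ is linear and hence $C^{1}$ on the open sectors $s/t<c_1$ and $s/t>c_2$ by (a) and (b). On the open cone, the implicit function theorem applied to $F'_{s,t}(z^{*})=0$, which is applicable because $\partial_z F'_{s,t}=F''_{s,t}>0$, makes $z^{*}$ a $C^{1}$ function of $(s,t)$; the envelope theorem then delivers $\nabla g(s,t)=(\E[(a+z^{*})^{-1}],\E[(b-z^{*})^{-1}])$. The remaining task, which I expect to be the main technical obstacle, is matching $\nabla g$ across the critical rays $s/t=c_1$ (when $c_1>0$) and $s/t=c_2$ (when $c_2<\infty$): as $(s,t)$ in the cone approaches such a ray, uniqueness together with the first-order condition forces $z^{*}(s,t)\to-\ubar{\alpha}$ or $\ubar{\beta}$, and the integrands $(a+z)^{-1}$, $(b-z)^{-1}$ threaten to become singular in the limit. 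The saving observation, which must be extracted from the definitions, is that $c_1>0$ already forces $\E[(a-\ubar{\alpha})^{-2}]<\infty$ and hence $\E[(a-\ubar{\alpha})^{-1}]<\infty$, so monotone convergence gives $\E[(a+z^{*})^{-1}]\to\E[(a-\ubar{\alpha})^{-1}]$ and $\E[(b-z^{*})^{-1}]\to\E[(b+\ubar{\alpha})^{-1}]$, matching the gradient on the linear side; the ray $s/t=c_2$ is handled symmetrically.
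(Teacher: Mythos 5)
Your proposal is correct and follows essentially the same route as the paper: reduce to the one-variable function $z\mapsto sA(z)+tB(z)$, read off (a) and (b) from the sign of its derivative at the endpoints (equivalently $s/t\le c_1$ or $\ge c_2$), get strict concavity from uniqueness/injectivity of the interior critical point, and get $C^1$ smoothness from smooth dependence of the minimizer on $s/t$ together with matching gradient limits at the rays $s/t=c_1,c_2$ (where, as you note and as the paper remarks via the Schwarz inequality, $c_1>0$ forces $\E[(a-\ubar{\alpha})^{-1}]<\infty$). The only differences are cosmetic: you invoke the implicit function/envelope theorems where the paper inverts $-B'/A'$ via the inverse function theorem, and you prove (c) by averaging at a common $z$ rather than by superadditivity plus homogeneity.
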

By Schwarz inequality, if $c_1 > 0$ then $\E[(a-\ubar{\alpha})^{-1}] < \infty$ and if $c_2 < \infty$ then $\E[(b-\ubar{\beta})^{-1}] < \infty$. Hence, $g$ is finite and linear in $(s, t)$ in the regions $s/t \le c_1$ and $s/t \ge c_2$.  
\begin{proof}[Proof of Corollary \ref{c3}]
Let $A(z) = \E[(a+z)^{-1}]$ for $z > -\ubar{\alpha}$ and $B(z) = \E[(b-z)^{-1}]$ for $z < \ubar{\beta}$. Using dominated convergence, $A$ and $B$ can be differentiated under the expectation. Thus, 
$A'(z) = -\E[(a+z)^{-2}],\ B'(z) = \E[(b-z)^{-2}],\ A''(z) = 2\E[(a+z)^{-3}],\ B''(z) = 2\E[(b-z)^{-3}]$, etc. Also, define $A, B$ and their derivatives at the endpoints by substituting $-\ubar{\alpha}$ and $\ubar{\beta}$ for $z$ in the preceding formulas. Then, by monotone convergence, the values at the endpoints match the appropriate one-sided limits, that is, $A(-\ubar{\alpha}) = \lim_{z \downarrow -\ubar{\alpha}} A(z) = \E[(a-\ubar{\alpha})^{-1}]$, $B(\ubar{\beta}) = \lim_{z \uparrow \ubar{\beta}} B(z) = \E[(b-\ubar{\beta})^{-1}]$, and similarly for the derivatives. 

Since $A'$ and $B'$ are increasing and continuous on $(-\ubar{\alpha}, \ubar{\beta})$, the derivative $z \mapsto sA'(z) + tB'(z)$ is positive if $s/t \le c_1$, is negative if $s/t \ge c_2$ and has a unique zero if $c_1 < s/t < c_2$. Hence, (a) and (b) follow, and if $c_1 < s/t < c_2$ then $g(s, t) = sA(z) + tB(z)$, where 
$z \in (-\ubar{\alpha}, \ubar{\beta})$ is the unique solution of the equation
\begin{equation}\label{eq8}-\frac{B'(z)}{A'(z)} = \frac{s}{t}.\end{equation}
Since $ -B'/A'$ is increasing and continuous, it has an increasing inverse $\zeta$ defined on $(c_1, c_2)$. Let $s_1, t_1, s_2, t_2$ be as in (c). 
%Suppose that $s_1, t_1, s_2, t_2 > 0$ such that $c_1 < s_1/t_1, s_2/t_2 < c_2$ and $(s_1, t_1) \neq k(s_2, t_2)$ for any $k \in \bbR$. 
Then $\zeta(s_1/t_1) \neq \zeta(s_2/t_2)$, which implies the strict inequality  
\begin{equation}
\label{eq100}
\begin{aligned}
(s_1+s_2)A(z) + (t_1+t_2)B(z) &> s_1A(\zeta(s_1/t_1)) + t_1B(\zeta(s_1/t_1))\\ 
&+s_2A(\zeta(s_2/t_2)) + t_2B(\zeta(s_2/t_2)) \\
&= g(s_1, t_1) + g(s_2, t_2) \\
\end{aligned}
\end{equation}   
for any $z \in (-\ubar{\alpha}, \ubar{\beta})$. Note that $c_1 < (s_1+s_2)/(t_1+t_2) < c_2$. Setting
$z = \zeta((s_1+s_2)/(t_1+t_2))$ in (\ref{eq100}) yields  $g(s_1+s_2, t_1+t_2) > g(s_1, t_1) + g(s_2, t_2)$, and (c) comes from this and homogeneity. Since $-B'/A'$ is continuously differentiable with positive derivative
(as $A'', B', B > 0$ and $A' < 0$ on $(-\ubar{\alpha}, \ubar{\beta})$), by the inverse function theorem, $\zeta$ is continuously differentiable as well. Using (\ref{eq8}), we compute the gradient of $g$ for $c_1 < s/t < c_2$ as 
$
\nabla g(s, t) =  (A(\zeta(s/t)), B(\zeta(s/t))), 
$
which tends to $(A(-\ubar{\alpha}), B(-\ubar{\alpha}))$ as $s/t \rightarrow c_1$ and to $(A(\ubar{\beta}), B(\ubar{\beta}))$ as $s/t \rightarrow c_2$. Hence, (d). 
\end{proof}

When $\alpha$ and $\beta$ are uniform distributions, we can compute the infimum in (\ref{eq3}) explicitly.  
\begin{cor}
\label{C1}
Let $\lambda, l, m > 0$. Suppose that $\alpha$ and $\beta$ are uniform distributions on $[\lambda/2, \lambda/2+l]$ and $[\lambda/2, \lambda/2+m]$, respectively. 
Then, for $s, t > 0$,  
\begin{align*}
g(s, t) &= \frac{s}{l}\log\left(1+\frac{l}{\lambda} + \frac{l}{\lambda} \cdot \frac{lt-ms + \sqrt{(lt-ms)^2+4st(\lambda+l)(\lambda+m)}}{2s(\lambda+m)}\right)\\
&+ \frac{t}{m}\log\left(1+\frac{m}{\lambda} + \frac{m}{\lambda} \cdot \frac{ms-lt + \sqrt{(lt-ms)^2+4st(\lambda+l)(\lambda+m)}}{2t(\lambda+l)}\right). 
\end{align*}
\end{cor}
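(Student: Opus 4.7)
The plan is to apply Theorem \ref{T1} (valid since $\ubar{\alpha} = \ubar{\beta} = \lambda/2 > 0$) and carry out the one-variable minimization explicitly. With the uniform densities,
$$\E\!\left[\frac{1}{a+z}\right] = \frac{1}{l}\log\frac{\lambda/2+l+z}{\lambda/2+z}, \qquad \E\!\left[\frac{1}{b-z}\right] = \frac{1}{m}\log\frac{\lambda/2+m-z}{\lambda/2-z}$$
on $z \in (-\lambda/2,\lambda/2)$, and both expectations blow up at the respective endpoints. Hence the infimum in (\ref{eq3}) is attained at an interior critical point $z^*$.

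I would then set $F'(z) = 0$, where $F$ denotes the objective in (\ref{eq3}). Using $\frac{d}{dz}\log\frac{u+l+z}{u+z} = -l/[(u+l+z)(u+z)]$ with $u = \lambda/2$, the critical equation rearranges to
$$s\,(\lambda/2+m-z)(\lambda/2-z) \;=\; t\,(\lambda/2+l+z)(\lambda/2+z).$$
Setting $p = \lambda/2+z$ (so that $\lambda/2-z = \lambda-p$), this becomes the quadratic
$$(t-s)\,p^2 + (2s\lambda + sm + tl)\,p - s\lambda(\lambda+m) = 0,$$
whose discriminant simplifies to $D = (lt-ms)^2 + 4st(\lambda+l)(\lambda+m)$. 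The unique root lying in the admissible range $(0,\lambda)$ is
$$p^* \;=\; \frac{-(2s\lambda + sm + tl) + \sqrt{D}}{2(t-s)},$$
with the degenerate case $s = t$ recovered either by continuity or by solving the resulting linear equation directly.

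The remaining task, which is the main algebraic obstacle, is to put $p^*$ into a form that substitutes cleanly into the logarithms. Rationalizing $\lambda/p^*$ by multiplying numerator and denominator by the conjugate and using the identity $(2s\lambda + sm + tl)^2 - D = 4s\lambda(\lambda+m)(s-t)$ yields
$$\frac{\lambda}{p^*} \;=\; 1 + \frac{lt - ms + \sqrt{D}}{2s(\lambda+m)},$$
and the completely analogous simplification of $\lambda/(\lambda - p^*)$, using $(2\lambda t + sm + tl)^2 - D = 4\lambda t(\lambda+l)(t-s)$, gives
$$\frac{\lambda}{\lambda - p^*} \;=\; 1 + \frac{ms - lt + \sqrt{D}}{2t(\lambda+l)}.$$
Since $\log\frac{\lambda/2+l+z^*}{\lambda/2+z^*} = \log(1 + l/p^*) = \log\bigl[1 + (l/\lambda)\cdot(\lambda/p^*)\bigr]$ and similarly $\log\frac{\lambda/2+m-z^*}{\lambda/2-z^*} = \log\bigl[1 + (m/\lambda)\cdot(\lambda/(\lambda-p^*))\bigr]$, substituting the two displays above into $g(s,t) = F(z^*)$ produces exactly the two logarithmic summands claimed in the corollary.
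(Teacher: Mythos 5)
Your proposal is correct and follows essentially the same route as the paper: apply Theorem \ref{T1}, note that the endpoint divergence of $\E[(a+z)^{-1}]$ and $\E[(b-z)^{-1}]$ forces an interior minimizer (the paper phrases this as $c_1=0$, $c_2=\infty$), solve the resulting quadratic critical equation, and substitute back. Your change of variable $p=\lambda/2+z$, the identification $D=(lt-ms)^2+4st(\lambda+l)(\lambda+m)$ (which agrees with the paper's $(sm+tl)^2+4st\lambda(\lambda+l+m)$), and the rationalization identities correctly carry out the ``elementary algebra'' the paper leaves implicit.
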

\begin{proof}
Since $\alpha$ and $\beta$ are uniform distributions,
\begin{align*}
A(z) = \E\left[\frac{1}{a+z}\right] = \frac{1}{l}\log\bigg(1+\frac{l}{z+\lambda/2}\bigg) \quad B(z) = \E\left[\frac{1}{b-z}\right] = \frac{1}{m}\log\bigg(1+\frac{m}{-z+\lambda/2}\bigg)
\end{align*} 
for $z \in (-\lambda/2, \lambda/2)$. We compute the derivatives as 
\begin{align*}
A'(z) = -\frac{1}{(z+\lambda/2)(z+\lambda/2+l)} \quad B'(z) = \frac{1}{(-z+\lambda/2)(-z+\lambda/2+m)}.
\end{align*}
Because $A'(-\lambda/2) = -\infty$ and $B'(\lambda/2) = \infty$, we have $c_1 = 0$ and $c_2 = \infty$. Also, (\ref{eq8}) leads to 
\begin{align*}
(s-t)z^2-(s(\lambda+m)+t(\lambda+l))z +s\lambda(\lambda+2m)/4-t\lambda(\lambda+2l)/4 = 0. 
\end{align*}
It follows from the discriminant formula that the solution in the interval $(-\lambda/2, \lambda/2)$ is 
\begin{align*}
z = \frac{\lambda}{2} \frac{s(\lambda+2m)-t(\lambda+2l)}{s(\lambda+m)+t(\lambda+l)+ \sqrt{(sm+tl)^2+4st\lambda(\lambda+m+l})}. 
\end{align*}
Inserting this into $g(s, t) = sA(z)+tB(z)$ and some elementary algebra yield the result.  
\end{proof}
The preceding argument can be repeated when $l = 0$ or $m = 0$. In these cases, $\alpha$ and $\beta$ are understood as point masses at $\lambda/2$. For instance, when $l = 0$ and $m > 0$, we obtain 
\begin{align*}
g(s, t) &= \frac{2s\lambda+ms+\sqrt{(ms)^2+4st\lambda(\lambda+m)}}{2\lambda(\lambda+m)}+\frac{t}{m}\log\left(1+\frac{m}{\lambda} + \frac{m}{\lambda} \cdot \frac{ms + \sqrt{(ms)^2+4st\lambda(\lambda+m)}}{2t\lambda}\right)
\end{align*}
When $l = 0$ and $m = 0$, we recover (\ref{E1}). 

We can also determine $g$ along the diagonal when $\alpha$ and $\beta$ are the same. 
\begin{cor}
Suppose that $\alpha = \beta$. Then $g(s, s) = 2s\E\left[\dfrac{1}{a}\right]$ for $s > 0$. 
\end{cor}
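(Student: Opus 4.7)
The plan is to invoke the variational formula for $g$ from Theorems \ref{T1} and \ref{T2} and minimize the resulting one‑variable function by hand, exploiting the symmetry induced by $\alpha = \beta$. Two cases arise according to whether $\ubar{\alpha} = 0$ or $\ubar{\alpha} > 0$. In the first case, $\ubar{\beta} = \ubar{\alpha} = 0$ as well, and Theorem \ref{T2} gives $g(s,s) = s\E[1/a] + s\E[1/b] = 2s\E[1/a]$ immediately, because $\alpha = \beta$ forces $\E[1/b] = \E[1/a]$.

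In the remaining case $\ubar{\alpha} = \ubar{\beta} > 0$, I apply Theorem \ref{T1}. Since $b$ and $a$ have the same law, $\E[1/(b-z)] = \E[1/(a-z)]$ for every admissible $z$, and combining the two fractions I obtain
\[
\varphi(z) \;:=\; \E\!\left[\frac{1}{a+z}\right] + \E\!\left[\frac{1}{a-z}\right] \;=\; \E\!\left[\frac{2a}{a^{2} - z^{2}}\right],
\]
so that $g(s,s) = s \inf_{z \in (-\ubar{\alpha},\, \ubar{\alpha})} \varphi(z)$. For any such $z$ one has $a \ge \ubar{\alpha} > |z|$ almost surely, hence $0 < a^{2} - z^{2} \le a^{2}$ and $\tfrac{2a}{a^{2} - z^{2}} \ge \tfrac{2}{a}$ pointwise, with equality iff $z = 0$. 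Taking expectations preserves the inequality, so $\varphi(z) \ge 2\E[1/a] = \varphi(0)$ with equality iff $z = 0$. The infimum is therefore attained uniquely at $z = 0$, yielding $g(s,s) = 2s\E[1/a]$.

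No step looks like a genuine obstacle. The only item worth checking is that $\varphi(0) = 2\E[1/a]$ is finite when $\ubar{\alpha} > 0$, which is immediate from $1/a \le 1/\ubar{\alpha}$; this also justifies passing the pointwise inequality to the expectation. The symmetry $z \mapsto -z$ of the integrand under $\alpha = \beta$ is what makes the minimization trivial and removes the need for the general differentiability analysis used in Corollary \ref{c3}.
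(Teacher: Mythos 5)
Your proof is correct and follows essentially the same route as the paper: apply the variational formula of Theorem \ref{T1} (with Theorem \ref{T2} covering the degenerate case $\ubar{\alpha}=0$) and use the pointwise inequality $(a+z)^{-1}+(a-z)^{-1}\ge 2a^{-1}$, with equality iff $z=0$, to see that the infimum is attained at $z=0$. The only difference is that you make the case split and the finiteness/expectation-passing details explicit, which the paper leaves implicit.
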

\begin{proof}
We have 
$
(a+z)^{-1}+(a-z)^{-1} \ge 2a^{-1}
$
for $|z| \le \ubar{\alpha}$ with equality if only if $z = 0$. Therefore, 
\begin{align*}g(s, s) = s \inf_{z \in (-\ubar{\alpha}, \ubar{\alpha})} \E\left[\frac{1}{a+z}+\frac{1}{a-z}\right] = 2s \E\left[\frac{1}{a}\right]. \qquad \qedhere
\end{align*}
\end{proof}

We only report the analogous results for the geometric model. 
\begin{thm}
Suppose that $\bar{\alpha}\bar{\beta} < 1$ in the geometric model. Then
\begin{equation*}
g(s, t) = \inf \limits_{z \in (\bar{\alpha}, 1/\bar{\beta})}\  \left\{s\E\left[\frac{a/z}{1-a/z}\right] + t\E\left[\frac{bz}{1-bz}\right]\right\} \quad \text{ for } s, t > 0. \end{equation*}
\end{thm}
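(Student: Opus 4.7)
The plan is to mirror the proof of Theorem \ref{T1}, the exponential analog. Since $\bar\alpha\bar\beta < 1$, the interval $(\bar\alpha, 1/\bar\beta)$ is nonempty; fix $z$ in it, so that $a_i/z$ and $b_j z$ lie in $(0,1)$ $\mu$-almost surely. Following the prescription outlined before (\ref{E8}), I would augment the model with boundary weights $W(i, 0) \sim \geom(a_i/z)$ and $W(0, j) \sim \geom(b_j z)$ conditionally on $(\bfa, \bfb)$, independent of each other and of the bulk weights, and set $G(i, 0) = \sum_{k=1}^i W(k, 0)$, $G(0, j) = \sum_{k=1}^j W(0, k)$. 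The stationarity results of Section \ref{S4} apply to this geometric setting: the outgoing horizontal and vertical increment processes $\{G(i, n) - G(i-1, n)\}_{i \ge 1}$ and $\{G(n, j) - G(n, j-1)\}_{j \ge 1}$ have the same conditional product distributions as the boundary for every $n \ge 0$. Consequently the shape function $g_z$ of the augmented model is linear in each coordinate:
\[
g_z(s, t) = s\, f(z) + t\, h(z), \qquad f(z) := \E\!\left[\frac{a/z}{1 - a/z}\right],\ h(z) := \E\!\left[\frac{bz}{1 - bz}\right].
\]

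Next I would plug this into the variational identity (\ref{E8}), whose derivation in Section \ref{S4} is designed to cover the geometric case as well: for each admissible $z$,
\[
f(z) + h(z) = \sup_{u \in [0, 1]} \max\bigl\{(1-u) f(z) + g(u, 1),\ (1-u) h(z) + g(1, u)\bigr\}.
\]
The same algebraic rearrangement used for Theorem \ref{T1} then extracts the one-sided bound $g(s, t) \le s\, f(z) + t\, h(z)$ for all $s, t > 0$, and infimizing in $z$ yields $g(s, t) \le \inf_{z \in (\bar\alpha, 1/\bar\beta)} \{s f(z) + t h(z)\}$.

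For the matching lower bound I would reuse the symmetry-free argument developed in Section \ref{S5} for the exponential model. The required inputs are purely structural: $f$ is increasing and convex and $h$ is decreasing and convex on $(\bar\alpha, 1/\bar\beta)$ with the appropriate endpoint limits, while $g$ is concave, positively homogeneous and already bounded above by the infimum. These are precisely the hypotheses that the Section \ref{S5} machinery feeds on, so no genuinely new ingredient is needed to obtain the reverse inequality. The main obstacle is verifying that the three building blocks carry over to the geometric setting: (i) the single-site Burke-type invariance that upgrades geometric inputs and geometric bulk weights to geometric outputs of appropriately shifted parameter along each lattice line, (ii) the derivation of (\ref{E8}) for the inhomogeneous geometric model, and (iii) the abstract matching argument. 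Step (i) is where the memoryless-type identity for independent geometrics must be checked in place of its exponential counterpart; once it is in hand, (ii) and (iii) proceed by the same subadditivity and convexity arguments as in the exponential case, and the formula follows.
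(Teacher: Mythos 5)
Your proposal is correct and follows essentially the same route as the paper, which proves the exponential case in detail and obtains the geometric theorem by precisely this transfer: the geometric boundary measure (\ref{eq79}), Lemma \ref{l6}(ii), Proposition \ref{p5}, Lemma \ref{L4} and the identity (\ref{E8.1}) are already stated for the geometric model, and the Section \ref{S5} matching argument (including the geometric analog of Lemma \ref{l7}) needs only continuity and strict monotonicity of the coefficient functions together with the boundary values of $g$. One small correction that does not affect the argument: $f(z)=\E[a/(z-a)]$ is decreasing and $h(z)=\E[bz/(1-bz)]$ is increasing on $(\bar{\alpha},1/\bar{\beta})$, i.e.\ the monotonicity roles you assigned are swapped, and convexity is not actually needed for the theorem (only for the analog of Corollary \ref{c3}).
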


\begin{thm}
Suppose that $\bar{\alpha} = \bar{\beta} = 1$ in the geometric model. Then 
\begin{equation*}
g(s, t) = s\E\left[\frac{a}{1-a}\right] + t\E\left[\frac{b}{1-b}\right] \quad \text{ for } s, t > 0. 
\end{equation*}
\end{thm}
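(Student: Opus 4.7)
The theorem is the geometric analog of Theorem 2.2, and its proof should parallel that of Theorem 2.2 in Section \ref{S5}. The key observation is that the variational interval $(\bar\alpha, 1/\bar\beta)$ of the previous theorem collapses to the single point $\{1\}$ in the boundary case $\bar\alpha = \bar\beta = 1$, and the conjectured identity is precisely the value of the variational integrand at $z = 1$. I would establish the theorem by proving matching upper and lower bounds on $g(s, t)$.

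\textbf{Upper bound.} I would run the stationary model construction of Section \ref{S4} at the boundary value $z = 1$. Although $z = 1 \notin (\bar\alpha, 1/\bar\beta) = \emptyset$, the boundary weight parameters $a_i/z = a_i$ and $b_j z = b_j$ lie strictly in $(0, 1)$ almost surely, so $W(i, 0) \sim \text{Geom}(a_i)$ and $W(0, j) \sim \text{Geom}(b_j)$ remain proper geometric weights (with possibly infinite conditional means $a_i/(1-a_i)$ and $b_j/(1-b_j)$). Extending the stationarity argument of Section \ref{S4} to $z = 1$ -- a routine continuity passage since the underlying Burke-type identities persist in the limit -- the stationary shape function becomes the linear form $g_1(s, t) = s\E[a/(1-a)] + t\E[b/(1-b)]$. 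Because $G_1 \ge G$ by comparison of the recursions (the stationary model has non-negative boundary data against the zero boundary of the bulk), $g(s, t) \le g_1(s, t)$ is the desired upper bound.

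\textbf{Lower bound and conclusion.} For $\delta \in (0, 1)$, consider the auxiliary model with parameters $a_i(1-\delta)$ and $b_j(1-\delta)$; its marginals have right endpoints $1-\delta < 1$, so the previous theorem applies and
\[
g^\delta(s, t) = \inf_{z \in (1-\delta,\, 1/(1-\delta))} \left\{s\E\left[\frac{(1-\delta)a}{z - (1-\delta)a}\right] + t\E\left[\frac{(1-\delta)bz}{1 - (1-\delta)bz}\right]\right\}.
\]
A quantile-inversion coupling yields $W^\delta(i, j) \le W(i, j)$ and hence $g^\delta \le g$. As $\delta \to 0$ the admissible interval shrinks to $\{1\}$, and by monotone convergence the integrand at $z = 1$ tends to $s\E[a/(1-a)] + t\E[b/(1-b)]$; joint continuity of the integrand in $(\delta, z)$ then forces $g^\delta(s, t) \to s\E[a/(1-a)] + t\E[b/(1-b)]$. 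Combined with the upper bound,
\[
s\E[a/(1-a)] + t\E[b/(1-b)] = \lim_{\delta \to 0} g^\delta(s, t) \le g(s, t) \le g_1(s, t) = s\E[a/(1-a)] + t\E[b/(1-b)]
\]
gives the claim. The central technical step is extending the stationary-model construction of Section \ref{S4} to $z = 1$, in particular the a.s.\ convergence $G_1(\lfloor ns \rfloor, \lfloor nt \rfloor)/n \to g_1(s, t)$ when the boundary means may be infinite; this should follow from the ergodic theorem applied to the independent boundary increments, with care taken in the infinite-expectation regime.
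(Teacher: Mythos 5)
Your proposal is correct and essentially reproduces the paper's argument: the paper proves this geometric boundary case exactly as Theorem \ref{T2}, namely an upper bound $g \le g_1$ coming from the stationary model, which is already defined at $z=1$ and covered by Lemma \ref{L4} under the finite-mean hypothesis (and is vacuous when $\E[a/(1-a)+b/(1-b)]=\infty$, so your flagged ``central technical step'' of an a.s.\ law of large numbers with infinite boundary means is unnecessary), together with a lower bound obtained by perturbing $(\bfa,\bfb)$ so that the preceding theorem's infimum formula applies to a stochastically dominated model, then passing to the limit. Your multiplicative perturbation $a_i(1-\delta)$, $b_j(1-\delta)$ plays the same role as the paper's truncation map in the proof of Theorem \ref{T2}, and your limiting step is sound once ``joint continuity'' is read as the monotone-convergence/Fatou lower bound you also invoke (dominated convergence gives genuine continuity only in the finite-mean case).
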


\begin{cor}
Let $q \in (0, 1)$ and $0 < l, m < \sqrt{q}$. Choose $\alpha$ and $\beta$ as the distributions with densities proportional to $x \mapsto 1/x$ on the intervals $[\sqrt{q}-l, \sqrt{q}]$ and $[\sqrt{q}-m, \sqrt{q}]$, respectively. 
Then 
\begin{align*}
g(s, t) &= \frac{s}{L}\log\left(1 + \frac{l\sqrt{q}}{1-q}+\frac{l}{1-q}\frac{ly-mx+\sqrt{(ly-mx)^2+4xy(1+m\sqrt{q}-q)(1+l\sqrt{q}-q)}}{2x(1+m\sqrt{q}-q)}\right)\\
&+ \frac{t}{M}\log\left(1 + \frac{m\sqrt{q}}{1-q}+\frac{m}{1-q}\frac{mx-ly+\sqrt{(ly-mx)^2+4xy(1+m\sqrt{q}-q)(1+l\sqrt{q}-q)}}{2y(1+l\sqrt{q}-q)}\right)
\end{align*}
for $s, t > 0$, where $x = slM$, $y = tmL$, $L = \log\left(\dfrac{\sqrt{q}}{\sqrt{q}-l}\right)$ and $M = \log\left(\dfrac{\sqrt{q}}{\sqrt{q}-m}\right)$.
\end{cor}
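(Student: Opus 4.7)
The plan is to mirror the proof of Corollary 2.8 (the exponential uniform case) but with the geometric variational formula from Theorem 2.6 in place of Theorem 2.1. The starting point is
\begin{equation*}
g(s,t) = \inf_{z \in (\sqrt{q},\, 1/\sqrt{q})} \bigl\{ s A(z) + t B(z) \bigr\}, \qquad A(z) = \E\!\left[\frac{a}{z-a}\right],\ \ B(z) = \E\!\left[\frac{bz}{1-bz}\right].
\end{equation*}
With the density $\alpha(da) = \frac{1}{La}\,\one_{[\sqrt{q}-l,\sqrt{q}]}(a)\,da$, the factor $a$ in the numerator of $A$ cancels the $1/a$ in the density, reducing $A(z)$ to the elementary integral $\frac{1}{L}\int_{\sqrt{q}-l}^{\sqrt{q}} \frac{da}{z-a}$, which evaluates to $\frac{1}{L}\log\bigl(1+\frac{l}{z-\sqrt{q}}\bigr)$. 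A parallel computation (with substitution $u = 1-bz$) gives $B(z) = \frac{1}{M}\log\bigl(1+\frac{mz}{1-\sqrt{q}z}\bigr)$. This is the cleanest use of the $1/x$ density: it is precisely the weight that linearises the logarithms in $A$ and $B$.

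Next I would differentiate to obtain the factored forms
\begin{equation*}
A'(z) = -\frac{l/L}{(z-\sqrt{q})(z-\sqrt{q}+l)}, \qquad B'(z) = \frac{m/M}{(1-\sqrt{q}z)\bigl(1-(\sqrt{q}-m)z\bigr)},
\end{equation*}
(for $B'$ the cross-terms in the numerator cancel, leaving just $m$). The critical equation $sA'(z)+tB'(z)=0$ rearranges to
\begin{equation*}
y\,(z-\sqrt{q})(z-\sqrt{q}+l) \;=\; x\,(1-\sqrt{q}z)\bigl(1-(\sqrt{q}-m)z\bigr),
\end{equation*}
with $x = slM$ and $y = tmL$. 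Since $A'(\sqrt{q}^+)=-\infty$ and $B'((1/\sqrt{q})^-) = +\infty$, the critical points $c_1 = 0$ and $c_2 = \infty$ of the geometric analogue of Corollary 2.4 hold, so for every $s,t>0$ this equation has a unique root $z^\ast$ in $(\sqrt{q},1/\sqrt{q})$ at which the infimum is attained.

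The third step is to expand the critical equation as a quadratic in $z$ and apply the quadratic formula. The coefficients organise themselves so that the discriminant equals $(ly-mx)^2 + 4xy(1+l\sqrt{q}-q)(1+m\sqrt{q}-q)$, and solving for $z^\ast-\sqrt{q}$ (resp.\ $1-\sqrt{q}z^\ast$) gives the rational expressions that match the first (resp.\ second) log in the claimed formula. The final step is a direct substitution of $z^\ast$ into $sA(z^\ast)+tB(z^\ast)$, relying on the identity $1 + \tfrac{l}{z^\ast-\sqrt{q}} = 1 + \tfrac{l\sqrt{q}}{1-q} + \tfrac{l}{1-q}\cdot\tfrac{ly-mx+\sqrt{D}}{2x(1+m\sqrt{q}-q)}$ obtained by clearing denominators and using $z^\ast = \sqrt{q} + (z^\ast-\sqrt{q})$; the symmetric identity holds for the $B$-term.

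The main obstacle is purely computational: verifying that the two quadratic coefficients multiply out so that the discriminant collapses into the symmetric product $4xy(1+l\sqrt{q}-q)(1+m\sqrt{q}-q)$, and checking that the selected sign of $\sqrt{D}$ puts $z^\ast$ into the interior interval. Once these algebraic identities are confirmed, matching the resulting expressions to the log arguments in the statement is routine bookkeeping that parallels the end of the proof of Corollary 2.8.
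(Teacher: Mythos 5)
Your plan is correct and is exactly the route the paper intends: the geometric corollary is stated without proof as ``analogous,'' and the paper's proof of the exponential uniform case (Corollary \ref{C1}) proceeds precisely as you do --- the $1/x$ density cancels to give logarithmic $A$ and $B$, the factored derivatives show the analogues of $c_1=0$ and $c_2=\infty$ so the minimizer is interior, and the critical equation is a quadratic whose root is substituted back into $sA(z)+tB(z)$. Your formulas for $A(z)$, $B(z)$, $A'(z)$, $B'(z)$ and the critical equation $y(z-\sqrt{q})(z-\sqrt{q}+l)=x(1-\sqrt{q}z)\bigl(1-(\sqrt{q}-m)z\bigr)$ all check out, so what remains is the same ``elementary algebra'' the paper itself leaves implicit.
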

%Can we deduce this heuristically from the exponential model? 

\begin{cor}
Suppose that $\alpha = \beta$. Then $g(s, s) = 2s \E\left[\dfrac{a}{1-a}\right]$ for $s > 0$. 
\end{cor}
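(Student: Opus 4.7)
The plan is to follow the template of the exponential analog almost verbatim, splitting into cases according to $\bar\alpha$. In the generic case $\bar\alpha<1$, the previous geometric theorem (which requires $\bar\alpha\bar\beta<1$, and this holds since $\alpha=\beta$) gives, after setting $t=s$ and combining the two expectations,
\begin{equation*}
g(s,s) \;=\; s\,\inf_{z\in(\bar\alpha,\,1/\bar\alpha)} \E\!\left[\frac{a/z}{1-a/z} + \frac{az}{1-az}\right].
\end{equation*}
The interval $(\bar\alpha, 1/\bar\alpha)$ is invariant under the involution $z\mapsto 1/z$, which has fixed point $z=1$ and exchanges the two summands inside the expectation, so $z=1$ is the natural candidate for the minimizer.

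To make this rigorous, I would establish the pointwise inequality
\begin{equation*}
\frac{u/z}{1-u/z} + \frac{uz}{1-uz} \;\ge\; \frac{2u}{1-u}
\end{equation*}
for every $u\in[0,\bar\alpha]$ and $z\in(\bar\alpha,1/\bar\alpha)$, with equality iff $z=1$. Setting $f(x)=x/(1-x)$ on $[0,1)$, I would note that $f$ is convex and increasing, so by Jensen's inequality $f(u/z)+f(uz)\ge 2f\bigl(\tfrac{u/z+uz}{2}\bigr)$, and then $\tfrac{u/z+uz}{2} = \tfrac{u(1+z^2)}{2z} \ge u$ by AM--GM applied to $z$ and $1/z$ (equality iff $z=1$). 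Integrating against $\alpha$ and then substituting $z=1$ produces both the lower and the upper bound, giving $g(s,s)=2s\,\E[a/(1-a)]$.

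In the remaining boundary case $\bar\alpha=1$ (so $\bar\beta=1$ as well), the open interval $(\bar\alpha,1/\bar\alpha)$ degenerates and the variational formula is unavailable; instead I would appeal directly to the geometric counterpart of Theorem \ref{T2}, which asserts $g(s,t)=s\,\E[a/(1-a)]+t\,\E[b/(1-b)]$, and specialize with $t=s$ and $\alpha=\beta$. I do not expect substantial difficulty: the argument mirrors the exponential corollary, with the additive symmetry $z\mapsto -z$ there replaced by the multiplicative symmetry $z\mapsto 1/z$, and the only step requiring any thought is recognizing the right convex/monotone function $f(x)=x/(1-x)$ and matching the AM--GM inequality to it.
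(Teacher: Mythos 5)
Your proposal is correct and follows essentially the paper's route: the paper proves the exponential analogue by plugging $t=s$ into the variational formula and using the pointwise inequality $(a+z)^{-1}+(a-z)^{-1}\ge 2a^{-1}$ with the symmetric point as minimizer, and your argument is the direct multiplicative counterpart (symmetry $z\mapsto 1/z$, minimizer $z=1$, pointwise inequality verified by convexity and AM--GM), with the degenerate case $\bar\alpha=1$ handled, as intended, by the geometric analogue of Theorem \ref{T2}.
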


\section{The existence of the shape function} \label{S2}
\begin{lem}
\label{L1}
There exists a deterministic function $g:(0, \infty)^2 \rightarrow [0, \infty]$ such that  
\begin{equation*}
\lim \limits_{n \rightarrow \infty} \frac{G(\lf ns \rf, \lf nt \rf)}{n} = g(s, t) \quad \text{ for } s, t > 0 \quad \bbP\text{-a.s.}
\end{equation*}
Furthermore, $g$ is nondecreasing, homogeneous and concave. 
\end{lem}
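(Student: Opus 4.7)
The plan is to apply Kingman's superadditive ergodic theorem along rational directions and then interpolate to all $(s, t) \in (0, \infty)^2$ using coordinatewise monotonicity of $G$.

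Fix $(p, q) \in \bbN^2$ and consider
\[ L_{m, n} = \max_{\pi \in \Pi_{mp+1, mq+1, np, nq}}\ \sum_{(i, j) \in \pi} W(i, j) \]
for $0 \le m < n$, so $L_{0, n} = G(np, nq)$. Path concatenation through an intermediate site together with non-negativity of the weights yields the superadditivity $L_{m, \ell} \ge L_{m, n} + L_{n, \ell}$ for $m < n < \ell$. The array $(L_{m, n})$ is distributionally invariant under $(m, n) \mapsto (m+1, n+1)$, which corresponds on the underlying space to $\tau_p \times \tau_q$ acting on $(\bfa, \bfb)$ together with the induced shift on the weights. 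Kingman's theorem then gives $L_{0, n}/n \to L_\infty$ $\bbP$-a.s.\ with $L_\infty$ invariant under the joint shift. To identify $L_\infty$ as deterministic, I would first use the Kolmogorov $0$--$1$ law in weight coordinates (since $\bfP_{\bfa, \bfb}$ is a product measure and $\{L_{0, n}/n \to \ell\}$ is a tail event in the weights) to reduce $L_\infty$ to a function of $(\bfa, \bfb)$, and then invoke ergodicity of $\tau_p \times \tau_q$ under $\mu$ to make this function $\mu$-a.s.\ constant, say $g(p, q) \in [0, \infty]$. If $\bbE[L_{0, 1}] = \infty$, Kingman is replaced by the lower bound $L_{0, n}/n \ge n^{-1} \sum_{k = 0}^{n-1} L_{k, k + 1}$ combined with Birkhoff's theorem for nonnegative integrands, which forces $g(p, q) = \infty$.

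To extend $g$ to $(0, \infty)^2$, I would first set $g(p/k, q/k) := g(p, q)/k$ (consistent by evaluating the diagonal limit along $n \in k\bbN$ and bridging to all $n$ by monotonicity of $G$), and then for general $(s, t)$ sandwich $G(\lfloor ns \rfloor, \lfloor nt \rfloor)/n$ between its values at rational directions strictly below and above $(s, t)$, letting these rationals approach $(s, t)$; monotonicity of the rational shape traps a single limit $g(s, t)$. Monotonicity of $g$ transfers directly from $G$; homogeneity $g(cs, ct) = c g(s, t)$ for real $c > 0$ extends from positive rationals by continuity along rational directions and monotonicity; and superadditivity $g(s_1 + s_2, t_1 + t_2) \ge g(s_1, t_1) + g(s_2, t_2)$ follows by passing to the limit in the path-concatenation inequality, using stationarity of $(\bfa, \bfb)$ to identify the distribution of the shifted last-passage term with the original. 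Homogeneity combined with superadditivity gives concavity.

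The main obstacle is reconciling two non-classical features of the model: the weights are correlated along rows and columns via the shared factors $a_i$ and $b_j$, and individual weights may fail to be integrable when $\ubar{\alpha} = \ubar{\beta} = 0$ in the exponential model or $\bar{\alpha}\bar{\beta} = 1$ in the geometric model. The first is handled by the ergodicity hypothesis on $\mu$ under every diagonal shift $\tau_p \times \tau_q$, which is what makes the Kingman limit a.s.\ constant; the second by the Birkhoff fallback above, which forces $g$ to equal $+\infty$ precisely in those directions where the lower-bound sum diverges.
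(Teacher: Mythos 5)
Your proposal is correct and follows essentially the same route as the paper: a superadditive (Kingman/Liggett) ergodic theorem along lattice directions, with the limit made deterministic by a conditional Kolmogorov $0$--$1$ law in the weight coordinates combined with ergodicity of $\mu$ under $\tau_p \times \tau_q$ --- which is exactly the content of the paper's Lemma \ref{L2} --- and then extension to all real directions by monotonicity, with concavity from superadditivity plus homogeneity. Your explicit Birkhoff fallback for the non-integrable case is, if anything, slightly more careful than the paper's own sketch, which simply defers to Liggett's theorem and to \cite[Theorem~2.1]{Seppalainen} for these points.
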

Here, nondecreasing means that $g(s', t') \le g(s, t)$ for $0 < s' \le s$ and $0 < t' \le t$, and homogeneity means that $g(cs, ct) = cg(s, t)$ for $s, t, c > 0$. In the exponential model, $g$ is finite if $\ubar{\alpha}+\ubar{\beta} > 0$. This is by the standard properties of the stochastic order \cite[Theorem 1.A3]{Shaked}. Briefly, the i.i.d. measure $P$ on $\bbR_+^{\bbN^2}$ under which each $W(i, j)$ is exponentially distributed with rate $\ubar{\alpha}+\ubar{\beta}$ stochastically dominates $\bfP_{\bfa, \bfb}$ and $G$ is a nondecreasing function of the weights. Thus, $g(s, t)$ does not exceed the right-hand side of (\ref{E1}) with $\lambda = \ubar{\alpha}+\ubar{\beta}$. Similarly, $g$ is finite in the geometric model if $\bar{\alpha}\bar{\beta} < 1$. Extend $g$ to $\bbR_+^2$ by setting $g(0, 0) = 0$, $g(s, 0) = \lim_{t \downarrow 0} g(s, t)$ and $g(0, t) = \lim_{s \downarrow 0} g(s, t)$ for $s, t > 0$. 

Lemma \ref{L1} can be proved using the ergodicity properties of $\bbP$ and superadditivity of the last-passage times. As this is quite standard, we will leave out many details. For $k, l \in \bbZ_+$, let $\theta_{k, l}: \bbR^{\bbN^2} \rightarrow \bbR^{\bbN^2}$ be given by $\theta_{k, l}(\omega)(i, j) = \omega(i+k, j+l)$ for $i, j \in \bbN$ and $\omega \in \bbR^{\bbN^2}$. Note that $\bbP$ is stationary with respect to $\theta_{k, l}$ because $\bbP(\theta_{k, l}^{-1}(B)) = \E\bfP_{\bfa, \bfb}(\theta_{k, l}^{-1}(B)) = \E\bfP_{\tau_k (\bfa), \tau_l (\bfb)}(B) = \bbP(B)$ for any Borel set $B \subset \bbR_+^{\bbN^2}$.
\begin{lem}
\label{L2}
$\bbP$ is ergodic with respect to $\theta_{k, l}$ for any $k, l \in \bbN$. 
\end{lem}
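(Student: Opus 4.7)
The plan is to fix $k, l \in \bbN$ and a $\theta_{k, l}$-invariant Borel set $B \subset \bbR_+^{\bbN^2}$ and show $\bbP(B) \in \{0, 1\}$. The argument splits into two conceptually independent parts: first I would use ergodicity of $\mu$ to reduce to showing that $(\bfa, \bfb) \mapsto \bfP_{\bfa, \bfb}(B)$ is $\mu$-a.s.\ constant, and then apply a Kolmogorov-type $0$--$1$ argument pointwise in $(\bfa, \bfb)$ to conclude that this constant lies in $\{0, 1\}$.

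For the reduction, set $f(\bfa, \bfb) = \bfP_{\bfa, \bfb}(B)$. The stationarity paragraph just before the lemma already records the covariance relation $\bfP_{\bfa, \bfb}(\theta_{k, l}^{-1}(C)) = \bfP_{\tau_k \bfa, \tau_l \bfb}(C)$ for any Borel $C$, which is immediate from the product form of $\bfP_{\bfa, \bfb}$ and the fact that the marginal at $(i, j)$ depends on $(\bfa, \bfb)$ only through $(a_i, b_j)$. Applied to $C = B$ and combined with $\theta_{k, l}^{-1} B = B$, this yields $f(\bfa, \bfb) = f(\tau_k \bfa, \tau_l \bfb)$. Because $\mu$ is ergodic under $\tau_k \times \tau_l$, the invariant function $f$ is $\mu$-a.s.\ equal to a constant $c$, and hence $\bbP(B) = \int f \, d\mu = c$.

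For the $0$--$1$ step, $\theta_{k, l}$-invariance iterates to $B = \theta_{k, l}^{-n} B$ for every $n \ge 1$, so membership in $B$ depends only on the shifted coordinates $\{W(i+nk, j+nl) : i, j \in \bbN\}$; hence $B$ lies in the ``corner tail'' $\sigma$-algebra $\sT := \bigcap_{n \ge 1} \sigma(W(i, j) : i > n,\ j > n)$. Fix $(\bfa, \bfb)$. Under the product measure $\bfP_{\bfa, \bfb}$, every set in $\sigma(W(i, j) : i > n,\ j > n)$ is independent of $\sigma(W(i, j) : i \le n \text{ or } j \le n)$. As $n \to \infty$, these complementary $\sigma$-algebras exhaust $\sigma(W(i, j) : i, j \in \bbN)$ (each $(i_0, j_0)$ is included as soon as $n \ge \min(i_0, j_0)$), so by a standard $\pi$-$\lambda$ argument each $A \in \sT$ is $\bfP_{\bfa, \bfb}$-independent of itself, forcing $\bfP_{\bfa, \bfb}(A) \in \{0, 1\}$. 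Applied to $A = B$, this gives $f(\bfa, \bfb) \in \{0, 1\}$ pointwise, so $c \in \{0, 1\}$ and $\bbP(B) = c \in \{0, 1\}$.

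The only delicate point is the last step: $\sT$ is \emph{smaller} than the usual product tail on $\bbN^2$ because it demands $i > n$ \emph{and} $j > n$ simultaneously, so one has to verify that the complementary $\sigma$-algebras still generate the entire product $\sigma$-algebra. Once this is observed, the rest is routine, and I do not foresee any other obstacle beyond the standard measurability of $(\bfa, \bfb) \mapsto \bfP_{\bfa, \bfb}(B)$.
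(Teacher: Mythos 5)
Your proof is correct and follows essentially the same route as the paper: the paper likewise shows the invariant set lies in a corner/tail $\sigma$-algebra so that $\bfP_{\bfa,\bfb}(B)\in\{0,1\}$ by a Kolmogorov-type argument (quoting the $0$--$1$ law for the block $\sigma$-algebras $\sigma(A_n\smallsetminus A_{n+1})$ rather than rerunning the $\pi$-$\lambda$ proof as you do), and then uses the covariance relation $\bfP_{\bfa,\bfb}(\theta_{k,l}^{-1}(B))=\bfP_{\tau_k\bfa,\tau_l\bfb}(B)$ together with ergodicity of $\mu$ under $\tau_k\times\tau_l$. The only differences are the order of the two steps and your explicit verification that the complementary $\sigma$-algebras exhaust the product $\sigma$-algebra, which is a correct and harmless elaboration.
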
 
\begin{proof}
Suppose $\theta_{k, l}^{-1}(B) = B$ for some Borel set $B \subset \bbR_+^{\bbN^2}$. For $n \ge 1$, let $\sT_n$ denote the $\sigma$-algebra generated by $A_n$, the collection of $W(i, j)$ with $i > k(n-1)$ and $j > l(n-1)$. Then $B$ is in $\sT = \bigcap_{n \in \bbN} \sT_n$. Also, $\sT$ is the tail $\sigma$-algebra of the $\sigma$-algebras generated by $A_{n}\smallsetminus A_{n+1}$. Because $\bfP_{\bfa, \bfb}$ is a product measure, by Kolmogorov's $0$--$1$ law,
$\bfP_{\bfa, \bfb}(B) \in \{0, 1\}$. Therefore, 
$\bbP(B) = \mu(\bfP_{\bfa, \bfb}(B) = 1)$. 
On the other hand, 
\[\begin{aligned}
(\tau_k \times \tau_l)^{-1}\{\bfP_{\bfa, \bfb}(B) = 1\} &= \{\bfP_{\tau_k \bfa, \tau_l \bfb}(B) = 1\} = \{\bfP_{\bfa, \bfb}(\theta_{k, l}^{-1}(B)) = 1\} = \{\bfP_{\bfa, \bfb}(B) = 1\}. 
\end{aligned}\]
Since $\mu$ is ergodic under $\tau_k \times \tau_l$, we conclude that $\bbP(B) \in \{0, 1\}$.  
\end{proof}

\begin{proof}[Proof of Lemma \ref{L1}]
Fix $s, t \in \bbN$ and define, for integers $0 \le m < n$,  
\begin{align*}Z(m, n) = -G((n-m)s, (n-m)t) \circ \theta_{ms, mt} = \max \limits_{\pi \in \Pi_{ms+1, mt+1, ns, nt}} \sum \limits_{(i, j) \in \pi} W(i, j).\end{align*}
Using the definition and Lemma \ref{L2}, we observe that $\{Z(m, n): 0 \le m < n\}$ is a subadditive process that satisfies the hypotheses of Liggett's subadditive ergodic theorem \cite{Liggett}.  
Hence, $Z(0, n)/n = G(ns, nt)/n$ converges $\bbP$-a.s. to a deterministic limit, $g(s, t)$. The existence of the 
limit for all $s, t > 0$ $\bbP$-a.s. and the claimed properties of $g$ follow as in the case of i.i.d. weights  \cite[Theorem~2.1]{Seppalainen}. 
\end{proof}

\section{Stationary distributions of the last-passage increments}  
\label{S4}

%In this section, we consider recursion (\ref{e2}) with suitably chosen distributions for the boundary values $\{G(i, 0), G(0, j): i, j \in \bbN\}$ such that the distributions of the last-passage increment processes $(G(i, k)-G(i-1, k))_{i \in \bbN}$ and $(G(k, j)-G(k, j-1))_{j \in \bbN}$ are product measures and do not depend on $k \in \bbZ_+$. Using these features, we then compute the corresponding shape functions $g_z$, where $z$ is a convenient real parameter indexing 
%various choices for the boundary values. We also obtain (\ref{E8}) and, thereby, connect $g_z$ with $g$.  The idea of using stationary processes to compute limit shapes was introduced in \cite{Seppalainen4}. Here, we adapt from \cite{Seppalainen} which relied on the above scheme to prove (\ref{E2}). 

Let us extend the sample space to $\bbR_+^{\bbZ_+^2}$. Now $W(i, j)$ denotes the projection onto coordinate $(i, j)$ for $i, j \in \bbZ_+$. Define the last-passage time $\widehat{G}(i, j)$ through recursion (\ref{e2}) but with the boundary values 
$\widehat{G}(i, 0) = \sum_{k=1}^i W(k, 0)$ and $\widehat{G}(0, j) = \sum_{k=1}^j W(0, k)$ for $i, j \in \bbN$. 
We then have
\begin{equation}
\label{eq42}
\widehat{G}(m, n) = \max \limits_{\pi \in \Pi_{0, 0, m, n}} \sum \limits_{(i, j) \in \pi} W(i, j) \quad \text{ for } m, n \in \bbZ_+. 
\end{equation}

In the exponential model, 
%thus, $\bfa = (a_n)_{n \in \bbN}$ and $\bfb = (b_n)_{n \in \bbN}$ are random sequences in $(0, \infty)$ and $\bfP_{\bfa, \bfb}$ satisfies (\ref{E4}). 
for each value of $(\bfa, \bfb)$ such that $a_n \ge \ubar{\alpha}$ and $b_n \ge \ubar{\beta}$ for $n \in \bbN$ (which holds $\mu$-a.s.) and parameter $z \in (-\ubar{\alpha}, \ubar{\beta})$, define $\bfP_{\bfa, \bfb}^z$ as the product measure on $\bbR_+^{\bbZ_+^2}$ by 
\begin{equation}
\label{eq78}
\begin{aligned}
&\bfP_{\bfa, \bfb}^z(W(i, j) \ge x) = \exp(-(a_i+b_j)x) \quad &&\bfP_{\bfa, \bfb}^z(W(0, 0) = 0) = 1 \\
&\bfP_{\bfa, \bfb}^z(W(i, 0) \ge x) = \exp(-(a_i+z)x) \quad &&\bfP_{\bfa, \bfb}^z(W(0, j) \ge x) = \exp(-(b_j-z)x)
\end{aligned}
\end{equation}
for $x \ge 0$ and $i, j \in \bbN$. When $\ubar{\alpha} = \ubar{\beta} = 0$, we make definition (\ref{eq78}) for $z = 0$. Note that the projection of $\bfP_{\bfa, \bfb}^z$ onto coordinates $\bbN^2$ is $\bfP_{\bfa, \bfb}$. %Also, define the probability measure $\bbP^z$ by $\bbP^z(B) = \E[\bfP_{\bfa, \bfb}^z(B)]$ for any Borel set $B \subset \bbR_+^{\bbZ_+^2}$. 
For the geometric model, the construction is similar. %Now $\bfa$ and $\bfb$ are random sequences in $(0, 1)$ and $\bfP_{\bfa, \bfb}$ satisfies (\ref{E5}). 
For $z \in (\bar{\alpha}, 1/\bar{\beta})$ and each value of $(\bfa, \bfb)$ such that $a_n \le \bar{\alpha}$ and $b_n \le \bar{\beta}$ for $n \in \bbN$, the measure $\bfP_{\bfa, \bfb}^z$ is given by  
\begin{equation}
\label{eq79}
\begin{aligned}
&\bfP_{\bfa, \bfb}^z(W(i, j) \ge k) = a_i^k b_j^k \quad &&\bfP_{\bfa, \bfb}^z(W(0, 0) = 0) = 1 \\
&\bfP_{\bfa, \bfb}^z(W(i, 0) \ge k) = a_i^k/z^k \quad &&\bfP_{\bfa, \bfb}^z(W(0, j) \ge k) = b_j^kz^k
\end{aligned}
\end{equation}
for $k \in \bbZ_+$ and $i, j \in \bbN$. When $\bar{\alpha} = \bar{\beta} = 1$, definition (\ref{eq79}) makes sense for $z = 1$. 
 
Introduce the increment variables as $I(m, n) = \widehat{G}(m, n)-\widehat{G}(m-1, n)$ for $m \ge 1$ and $n \ge 0$, and $J(m, n) = \widehat{G}(m, n)-\widehat{G}(m, n-1)$ for $m \ge 0$ and $n \ge 1$. We capture the stationarity of the increments in the following proposition. 
\begin{prop}
\label{p5}
Let $k, l \in \bbZ_+$. Under $\bfP_{\bfa, \bfb}^z$,   
\begin{enumerate}[(a)]
\item $I(i, l)$ has the same distribution as $W(i, 0)$ for $i \in \bbN$. 
\item $J(k, j)$ has the same distribution as $W(0, j)$ for $j \in \bbN$. 
\item The random variables $\{I(i, l): i > k\} \cup \{J(k, j): j > l\}$ are (jointly) independent. 
\end{enumerate}
\end{prop}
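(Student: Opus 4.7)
The plan is to prove by induction on $k + l$ a \emph{strengthened} claim: under $\bfP_{\bfa, \bfb}^z$, the variables in
\[ \{I(i, l): i > k\} \cup \{J(k, j): j > l\} \cup \{W(i, j): i > k,\ j > l\} \]
are jointly independent, with $I(i, l) \sim \text{Exp}(a_i + z)$, $J(k, j) \sim \text{Exp}(b_j - z)$, and $W(i, j) \sim \text{Exp}(a_i + b_j)$ in the exponential model (and the parallel statement with parameters $a_i/z$, $b_j z$, $a_i b_j$ under the convention $\bbP(\cdot \ge m) = p^m$ in the geometric model). Proposition~\ref{p5} follows by discarding the bulk-weight part. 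The base case $k = l = 0$ is immediate from (\ref{eq78}) and (\ref{eq79}): $I(i, 0) = W(i, 0)$, $J(0, j) = W(0, j)$, and the full family $\{W(i, j): (i, j) \in \bbZ_+^2\}$ is independent with the stated marginals by construction of $\bfP_{\bfa, \bfb}^z$.

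The key tool is a local Burke-type exchange identity: if $X, Y, V$ are independent with $X \sim \text{Exp}(\mu)$, $Y \sim \text{Exp}(\nu)$, $V \sim \text{Exp}(\mu + \nu)$, and we set $X' = (X - Y)^+ + V$, $Y' = (Y - X)^+ + V$, $U = X \wedge Y$, then $(X', Y', U)$ are independent with rates $(\mu, \nu, \mu + \nu)$. One verifies this by a direct computation of the joint survival function (conditioning on $U = u$ and invoking the memoryless property of the exponential), and the geometric analogue holds with parameters $(p, q, pq)$ by the parallel calculation. From recursion~(\ref{e2}) and the definitions of $I, J$, a short computation gives
\[ I(i, j) = (I(i, j-1) - J(i-1, j))^+ + W(i, j), \qquad J(i, j) = (J(i-1, j) - I(i, j-1))^+ + W(i, j), \]
so the exchange identity describes exactly what the growth dynamics do at a single interior site.

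To advance from $(k, l)$ to $(k+1, l)$ (the case $(k, l+1)$ being symmetric), I apply the exchange iteratively along column $i = k+1$. By the inductive hypothesis, the triple $(I(k+1, l), J(k, l+1), W(k+1, l+1))$ is independent of every other generator of the $(k, l)$-collection, with the Burke input rates, so the output $(I(k+1, l+1), J(k+1, l+1), U(k, l))$ has the claimed independent marginals and remains independent of everything that was untouched. Repeating at $(k+1, l+2), (k+1, l+3), \ldots, (k+1, N)$ for any finite $N$ produces, by the same reasoning, joint independence of $\{I(i, l): i > k+1\} \cup \{J(k+1, j): l < j \le N\} \cup \{W(i, j): i > k+1, j > l\}$ with the desired marginals. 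Letting $N \to \infty$ and using consistency of finite-dimensional distributions yields the $(k+1, l)$ claim. The main obstacle is the bookkeeping for the iterated application of the Burke identity: one must confirm that each local exchange depends only on its three input variables (so preserves independence from all untouched quantities), and that each variable produced in column $k+1$ is \emph{consumed} by at most one subsequent exchange—this holds here because each $I(k+1, j)$ appears only as input to the exchange at $(k+1, j+1)$, and each $J(k, j)$ and $W(k+1, j)$ is used in exactly one exchange at $(k+1, j)$.
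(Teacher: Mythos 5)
Your proposal is correct and follows essentially the same route as the paper: the local exchange identity you state is exactly Lemma \ref{l6} (the map $(x,y,v)\mapsto(x-x\wedge y+v,\,y-x\wedge y+v,\,x\wedge y)$ preserving the product law with rates $\mu,\nu,\mu+\nu$), and your column-by-column induction with the strengthened hypothesis is the standard argument the paper omits by citing \cite[Theorem~2.4]{Seppalainen}. The only cosmetic difference is that you verify the exchange lemma via the joint survival function and memorylessness, whereas the paper uses a change-of-variables computation.
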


(\ref{e2}) leads to the recursion \cite[(2.21)]{Seppalainen}
\begin{equation}
\label{eq81}
\begin{aligned}
I(m, n) &= I(m, n-1) - I(m, n-1) \wedge J(m-1, n) + W(m, n) \\
J(m, n) &= J(m-1, n) - I(n, n-1) \wedge J(m-1, n) + W(m, n) 
\end{aligned}
\end{equation}
for $m, n \in \bbN$. Proposition \ref{p5} can be proved via induction using (\ref{eq81}) and Lemma \ref{l6} below. We will omit the induction argument as it is the same as in \cite[Theorem~2.4]{Seppalainen}.  
 \begin{lem}
\label{l6}
 Let $F: \bbR^3 \rightarrow \bbR^3$ denote the map 
$(x, y, z) \mapsto (x-x \wedge y + z, y - x \wedge y + z, x \wedge y)$. 
Let $P$ be a product measure on $\bbR^3$ with marginals $P_1, P_2, P_3$. Suppose that one of the following holds. 
\begin{enumerate}[(\romannumeral1)]
\item $P_1, P_2$ and $P_3$ are exponential distributions with rates $a, b$ and $a+b$, for some $a, b \in (0, \infty)$. 
\item $P_1, P_2$ and $P_3$ are geometric distributions with parameters $a, b$ and $ab$, for some $a, b \in (0, 1)$. 
\end{enumerate}
Then $P( F^{-1}(B)) = P(B)$ for any Borel set $B \subset \bbR^3$. 
 \end{lem}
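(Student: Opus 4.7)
I would prove $F_*P = P$ in both cases by computing the joint Laplace transform (case (i)) or probability generating function (case (ii)) of $F(X, Y, Z)$ and checking that it agrees with the corresponding transform of $P$. Uniqueness of these transforms then identifies the push-forward.

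\textbf{Main steps.} Write $(X, Y, Z)$ for the coordinates on $\bbR^3$ and $F_1, F_2, F_3$ for the components of $F(X, Y, Z)$. Because $F_3 = X \wedge Y$ does not involve $Z$ and $Z$ enters $F_1$ and $F_2$ only additively, the independence of $Z$ from $(X, Y)$ under $P$ factors the transform in case (i) as
\[E\bigl[e^{-\lambda_1 F_1 - \lambda_2 F_2 - \lambda_3 F_3}\bigr] = E\bigl[e^{-(\lambda_1+\lambda_2)Z}\bigr] \cdot E\bigl[e^{-\lambda_1 (X - X \wedge Y) - \lambda_2 (Y - X \wedge Y) - \lambda_3 (X \wedge Y)}\bigr]\]
for $\lambda_i \ge 0$, and analogously in case (ii). The first factor is immediate from the law of $Z$. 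For the second, I split the expectation (or sum) along $\{X < Y\}$, $\{X > Y\}$, and in case (ii) also $\{X = Y\}$. On $\{X < Y\}$ one has $X \wedge Y = X$, so the exponent reduces to $-\lambda_2 (Y - X) - \lambda_3 X$, and Fubini together with the memoryless property of the exponential (respectively, a geometric-series identity in case (ii)) evaluates the integral in closed form. The case $\{X > Y\}$ is handled symmetrically.

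\textbf{Cancellation.} Summing the contributions, in case (i) the second factor takes the form
\[\frac{ab \, (a + b + \lambda_1 + \lambda_2)}{(a + \lambda_1)(b + \lambda_2)(a + b + \lambda_3)},\]
and an analogous expression with numerator $(1 - a b s_1 s_2)$ appears in case (ii). Multiplying by the $Z$-transform $\frac{a+b}{a+b+\lambda_1+\lambda_2}$ cancels the cross factor and yields
\[E\bigl[e^{-\lambda_1 F_1 - \lambda_2 F_2 - \lambda_3 F_3}\bigr] = \frac{a}{a + \lambda_1} \cdot \frac{b}{b + \lambda_2} \cdot \frac{a+b}{a+b+\lambda_3},\]
which is the Laplace transform of $P$. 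The geometric case produces the analogous product of three geometric generating functions $\frac{1-a}{1-a s_1} \cdot \frac{1-b}{1-b s_2} \cdot \frac{1-ab}{1-ab s_3}$.

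\textbf{Main obstacle.} The computation is short, and the real difficulty is organizational: one must keep the case split clean so that the cross factor arising from $X \wedge Y$ cancels exactly against the transform of $Z$. In case (ii) one must also remember to include the $\{X = Y\}$ contribution, which has positive mass and enters additively; omitting it would spoil the cancellation and obscure the otherwise uniform structure of the two cases.
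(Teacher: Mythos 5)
Your proposal is correct, and the transform computations come out right: in the exponential case the cross factor $ab(a+b+\lambda_1+\lambda_2)/[(a+\lambda_1)(b+\lambda_2)(a+b+\lambda_3)]$ indeed cancels against the Laplace transform $(a+b)/(a+b+\lambda_1+\lambda_2)$ of $Z$, and in the geometric case the diagonal $\{X=Y\}$ term you flag is exactly what makes the numerator collapse to $1-abs_1s_2$. However, this is not the paper's route: it is precisely the classical argument that the paper attributes to earlier work (Sepp\"al\"ainen's Lemma~2.3 and Bal\'azs's Lemma~4.1, "proved by comparing the Laplace transforms of the measures $P$ and $P(F^{-1}(\cdot))$") and deliberately replaces. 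The paper instead observes that $F$ is an involution ($F^{-1}=F$), is differentiable off the null set $\{x=y\}$ with Jacobian of absolute value $1$, and that the exponent of the joint density is exactly $F$-invariant, since $a(x-x\wedge y+z)+b(y-x\wedge y+z)+(a+b)(x\wedge y)=ax+by+(a+b)z$; the change-of-variables theorem then gives $P(F^{-1}(B))=P(B)$ in one line, with the geometric case as the discrete analogue. The comparison: your approach is elementary and self-verifying but requires a case split, closed-form integration, and an appeal to uniqueness of Laplace transforms and generating functions, whereas the paper's proof is shorter, computation-free, and makes transparent the structural reason the lemma holds (invariance of the linear form $ax+by+(a+b)z$ under $F$, together with measure preservation of the piecewise-linear involution). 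One small simplification available to you in case (i): under the exponential marginals $\{X=Y\}$ has probability zero, so only the two strict cases need to be evaluated there.
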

In earlier work \cite[Lemma~2.3]{Seppalainen} and \cite[Lemma~4.1]{Balasz}, Lemma \ref{l6} was proved by comparing the Laplace transforms of the measures $P$ and $P(F^{-1}(\cdot))$. 
We include another proof below. 
 \begin{proof}[Proof of Lemma \ref{l6}]
We prove (\romannumeral1) only as the proof of (\romannumeral2) is the discrete version of the same argument and is simpler. Observe that $F$ is a bijection on $\bbR^3$ with $F^{-1} = F$. 
It suffices to verify the claim for any open set $B$ in $\bbR^3$.  By continuity, $F^{-1}(B)$ is also open. Furthermore, $F$ is differentiable on the open set $\{(x, y, z): x > y \text{ or } x < y\}$ and its Jacobian equals $1$ in absolute value. Hence, by the change of variables \cite[Theorem 7.26]{Rudin}, 
\[
\begin{aligned}
P(F^{-1}(B)) &= ab(a+b)\int \limits_{F^{-1}(B)} e^{-ax-by-(a+b)z} dx dy dz \\
&= ab(a+b)\int \limits_{F^{-1}(B)} e^{-a(x-x \wedge y + z)-b(y-x \wedge y + z) -(a+b)(x \wedge y)} dx dy dz \\ 
&= ab(a+b)\int \limits_{B} e^{-au-bv-(a+b)w} du dv dw = P(B). \qedhere
\end{aligned}
\]

%The proof of (\romannumeral2) is a simpler, discrete version of the preceding argument.  Now, it suffices to verify (\ref{eq80})
%with $B \subset \bbZ_+^3$.  Note that $F^{-1}(B) = F(B) \subset \bbZ_+^3$ as well. Hence,  
%\[
% \begin{aligned}
%P(F^{-1}(B)) &= (1-a)(1-b)(1-ab) \sum \limits_{(i, j, k) \in F^{-1}(B)}  a^i b^j (ab)^k \\
%&= (1-a)(1-b)(1-ab) \sum \limits_{(i, j, k) \in F^{-1}(B)}  a^{i-i \wedge j+k} b^{j-i \wedge j + k} (ab)^{i \wedge j}  \\ 
%&=  (1-a)(1-b)(1-ab) \sum \limits_{(u, v, w) \in B} a^u b^v (ab)^w  \\
%&= P(B). \qedhere\\
%\end{aligned}
%\]
 \end{proof}

In the exponential and geometric models, respectively, define 
\begin{align*}g_z(s, t) &= s\E\left[\frac{1}{a+z}\right]+t\E\left[\frac{1}{b-z}\right] \quad \text{ for } s, t \ge 0 \text{ and } z \in [-\ubar{\alpha}, \ubar{\beta}]\\
g_z(s, t) &= s\E\left[\frac{a/z}{1-a/z}\right]+t\E\left[\frac{bz}{1-bz}\right] \quad \text{ for } s, t \ge 0 \text{ and } z \in [\bar{\alpha}, 1/\bar{\beta}]. 
\end{align*} 

\begin{lem}
\label{L4}
In the exponential model, let $z \in (-\ubar{\alpha}, \ubar{\beta})$ if $\ubar{\alpha}+\ubar{\beta} > 0$, and let $z = 0$ and assume that $\E[1/a+1/b] < \infty$ if $\ubar{\alpha} = \ubar{\beta} = 0$. In the geometric model, let $z \in (\bar{\alpha}, 1/\bar{\beta})$ if $\bar{\alpha}\bar{\beta} < 1$, and let $z = 1$ and assume that $\E[a/(1-a)+b/(1-b)] < \infty$ if $\bar{\alpha} = \bar{\beta} = 1$. Then 
\begin{align}\lim \limits_{n \rightarrow \infty} \frac{\widehat{G}(\lf ns \rf, \lf nt \rf)}{n} = g_z(s, t) \quad \text{ for } s, t \ge 0 \text{ in } \bfP_{\bfa, \bfb}^z\text{-probability for } \mu\text{-a.e. } (\bfa, \bfb). \label{E11}
\end{align} 
\end{lem}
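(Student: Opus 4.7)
The plan is to use the staircase decomposition $\widehat{G}(m, n) = A_m + B_{m, n}$, where $A_m = \sum_{i=1}^m W(i, 0)$ sums the $x$-axis weights and $B_{m, n} = \sum_{j=1}^n J(m, j)$ telescopes the column increments from $(m, 0)$ to $(m, n)$. I will treat $A_m$ and $B_{m, n}$ separately via Birkhoff's ergodic theorem. The crucial input is Proposition \ref{p5}: parts (b) and (c) imply that under $\bfP_{\bfa, \bfb}^z$ the family $(J(m, 1), \ldots, J(m, n))$ has the same joint law as $(W(0, 1), \ldots, W(0, n))$ -- namely independent exponentials (respectively geometrics) with rates $b_j - z$ (respectively parameters $b_j z$) -- and in particular this law does not depend on $m$.

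For $A_m$, I would show that the sequence $\{(a_i, W(i, 0))\}_{i \ge 1}$, considered on the joint probability space carrying both $\mu$ and $\bfP_{\bfa, \bfb}^z$, is stationary and ergodic under the index shift $i \mapsto i + 1$. Stationarity is immediate from the product form of $\bfP_{\bfa, \bfb}^z$ on the $x$-axis and the $\tau$-stationarity of $\mu_{\bfa}$. Ergodicity follows by a standard two-step argument: any shift-invariant event lies in the $(W(i, 0))_i$-conditional tail given $\bfa$, so has $\bfa$-conditional probability in $\{0, 1\}$ by Kolmogorov's law; the resulting $\tau$-invariant set in $\bfa$-space is $\mu_{\bfa}$-trivial because the ergodicity of $\mu$ under $\tau_1 \times \tau_1$ passes down to ergodicity of $\mu_{\bfa}$ under $\tau$. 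Birkhoff then yields $A_m / m \to \E[(a + z)^{-1}]$ (respectively $\E[(a/z)/(1 - a/z)]$) almost surely, hence $\bfP_{\bfa, \bfb}^z$-almost surely for $\mu$-a.e. $(\bfa, \bfb)$. An identical argument applied to $\{(b_j, W(0, j))\}_{j \ge 1}$ yields $C_n / n \to \E[(b - z)^{-1}]$ (respectively $\E[(bz)/(1 - bz)]$) in the same sense, where $C_n := \sum_{j=1}^n W(0, j)$.

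To transfer the limit from $C_n$ to $B_{m, n}$, the distributional identity from Proposition \ref{p5} gives, for every $m, n$ and every $\epsilon > 0$,
\[
\bfP_{\bfa, \bfb}^z\bigl(\bigl|B_{m, n}/n - \E[(b-z)^{-1}]\bigr| > \epsilon\bigr) = \bfP_{\bfa, \bfb}^z\bigl(\bigl|C_n/n - \E[(b-z)^{-1}]\bigr| > \epsilon\bigr),
\]
and the right-hand side tends to $0$ by the previous step. Writing $M = \lfloor ns \rfloor$ and $\ell = \lfloor nt \rfloor$ and combining, $\widehat{G}(M, \ell)/n = A_M/n + B_{M, \ell}/n \to s\E[(a + z)^{-1}] + t\E[(b - z)^{-1}] = g_z(s, t)$ in $\bfP_{\bfa, \bfb}^z$-probability. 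The boundary cases $s = 0$ or $t = 0$ collapse to a single sum and are immediate. In the degenerate endpoint cases ($z = 0$ with $\ubar{\alpha} = \ubar{\beta} = 0$, respectively $z = 1$ with $\bar{\alpha} = \bar{\beta} = 1$), the assumed $L^1$ moment conditions $\E[1/a + 1/b] < \infty$ (respectively $\E[a/(1-a) + b/(1-b)] < \infty$) are exactly what Birkhoff requires. The main subtle point, I expect, is the $m$-dependence of $B_{m, n}$: since $J(m, j)$ depends intricately on all weights in $[0, m] \times [0, j]$, a direct ergodic argument in $j$ would have to track the drift of $m$ with $n$, and Proposition \ref{p5} sidesteps this by a clean distributional identification.
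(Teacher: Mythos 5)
Your proposal is correct, and its skeleton (the decomposition $\widehat{G}(m,n)=\sum_{i\le m}W(i,0)+\sum_{j\le n}J(m,j)$ plus the distributional transfer of the $m$-dependent column increments to $\{W(0,j)\}$ via Proposition \ref{p5}) is exactly the paper's; you also correctly identify why the conclusion is only in probability, namely that the identification of $(J(m,j))_j$ with $(W(0,j))_j$ is purely distributional while $m=\lf ns\rf$ moves with $n$. Where you genuinely diverge is the law of large numbers for the inhomogeneous boundary sums. The paper fixes $(\bfa,\bfb)$ and proves convergence in $\bfQ_{\bfa}^z$-probability directly under the quenched measure, by computing the characteristic function of $n^{-1}\sum_i I(i,0)$, using ergodicity of $\bfa$ only through the average $n^{-1}\sum_i (a_i+z)^{-1}$, and controlling the argument of the complex logarithm with elementary $\arctan$ estimates. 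You instead work on the annealed space, claim ergodicity of the skew-product sequence $\{(a_i,W(i,0))\}_i$, apply Birkhoff, and pass from annealed a.s. to quenched a.s. for $\mu$-a.e. environment. This is legitimate, but the ergodicity step deserves the care you only sketch: one needs that a strictly shift-invariant event $B$ of the one-sided joint shift lies in the joint tail, so that its $\bfa$-slice is a tail event of the independent weights (giving $\bfQ_{\bfa}^z(B_{\bfa})\in\{0,1\}$ by Kolmogorov), and then the identity $\bfQ_{\bfa}^z\circ\tau^{-1}=\bfQ_{\tau\bfa}^z$ to see that $\{\bfa:\bfQ_{\bfa}^z(B_{\bfa})=1\}$ is $\tau$-invariant, hence trivial by the (marginal) ergodicity inherited from $\mu$ under $\tau_1\times\tau_1$; integrability of $(a+z)^{-1}$ (automatic for $z>-\ubar{\alpha}$, assumed at the endpoint) is what Birkhoff needs, as you note. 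What each route buys: yours avoids the characteristic-function computation and gives almost-sure convergence of the boundary sums (which the paper only gets by citing \cite{Emrah}), at the cost of the annealed ergodicity lemma; the paper's argument is self-contained at the quenched level and needs only ergodicity of the single sequence $\bfa$, delivering exactly the in-probability statement required.
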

In fact, the convergence in (\ref{E11}) is $\bfP_{\bfa, \bfb}^z$-a.s. for $\mu$-a.e $(\bfa, \bfb)$ provided that $\ubar{\alpha}+\ubar{\beta} > 0$ in the exponential model and $\bar{\alpha}\bar{\beta} < 1$ in the geometric model \cite[Theorem~4.3]{Emrah}. 
%This is claimed in the earlier version of this paper \cite[Theorem~4.3]{Emrah} erroneously for all $\alpha$ and $\beta$; however, the proof there does not work if $\ubar{\alpha} = \ubar{\beta} = 0$ and $\bar{\alpha} = \bar{\beta} = 1$ in the exponential and geometric models, respectively. This leads to a gap in the proof of \cite[Theorem~2.7]{Emrah}. We fix the gap using the weaker convergence in Lemma (\ref{L4}), which will also be sufficient for other purposes in the paper. 
By (\ref{eq51}), (\ref{eq42}) and nonnegativity of weights, $G(m, n) \le \widehat{G}(m, n)$ for $m, n \in \bbN$. Then Lemma \ref{L4} implies that $g(s, t) \le g_z(s, t)$ for any $s, t \ge 0$. The main result of this paper is that $g(s, t) = \inf_z g_z(s, t)$. 

\begin{proof}[Proof of Lemma \ref{L4}]
We will consider the exponential model only, the geometric model is treated similarly.   
Note that $\widehat{G}(\lf ns \rf, \lf nt \rf) = \sum_{i=1}^{\lf ns \rf} I(i, 0) + \sum_{j = 1}^{\lf nt \rf} J(\lf ns \rf, j)$ for $s, t \ge 0$ and $n \in \bbN$. By Proposition \ref{p5}, $\{J(\lf ns \rf, j): j \in \bbN\}$ has the same distribution as $\{J(0, j): j \in \bbN\}$ under $\bfP_{\bfa, \bfb}^z$. Hence, it suffices to show that 
\begin{align*}
\lim_{n \rightarrow \infty} \frac{1}{n} \sum \limits_{i=1}^{n} I(i, 0) = \E\left[\frac{1}{a+z}\right] \text{ and }  \lim_{n \rightarrow \infty} \frac{1}{n} \sum \limits_{j = 1}^{n} J(0, j) = \E\left[\frac{1}{b-z}\right] %\label{ee12}
\end{align*}
in $\bfP_{\bfa, \bfb}^z$ for $\mu$-a.s. We will only derive the first limit above, for which we will show that, for $z > -\ubar{\alpha}$ and for $z = -\ubar{\alpha}$ when $\E[(a-\ubar{\alpha})^{-1}] < \infty$,  
\begin{align}
\lim_{n \rightarrow \infty} \frac{1}{n} \sum \limits_{i=1}^{n} I(i, 0) = \E\left[\frac{1}{a+z}\right] \quad \text{ in } \bfQ_{\bfa}^z \ \mu\text{-a.s.}, \label{E16}
\end{align}
where $\bfQ_{\bfa}^z$ is the product measure on the coordinates $\bbN \times \{0\}$ given by $\bfQ_{\bfa}^z(W(i, 0) \ge x) = e^{-(a_i+z)x}$ for $i \in \bbN$ and $x \ge 0$.
It suffices to prove the convergence in distribution under $\bfQ_{\bfa}^z$ $\mu$-a.s. because the limit is deterministic.  

The characteristic function of
$n^{-1}\sum_{i=1}^n I(i, 0)$ under $\bfQ_{\bfa}^z$ is given by 
\begin{align*}\prod \limits_{i=1}^n \left(1-\frac{\bfi x}{n(a_i+z)}\right)^{-1} = \exp\left(-\sum \limits_{i=1}^n \log\left(1-\frac{\bfi x}{n(a_i+z)}\right)\right) \quad \text{ for } x \in \bbR, \end{align*}
where the complex logarithm denotes the principal branch. Hence, (\ref{E16}) follows if we prove   
\begin{align*}
\lim_{n \rightarrow \infty} -\sum \limits_{i=1}^n \log\left(1-\frac{\bfi x}{n(a_i+z)}\right) = \bfi x \E\left[\frac{1}{a+z}\right]\quad \text{ for } x \in \bbR \quad \mu\text{-a.s.,}
\end{align*}
Using the bound $|\log(1+\bfi x)| \le |x|$ for $x \in \bbR$ 
%\begin{align*}|\log(1+\bfi x)| \le \left|\int_{1}^{1+\bfi x} \frac{dw}{w}\right| = \left|\int_0^x \frac{\bfi du}{1+\bfi u}\right| \le |x| \quad \text{ for } x \in \bbR, \end{align*} 
%the triangle inequality 
and the ergodicity of $\bfa$, we obtain 
\begin{align*}\limsup_{n \rightarrow \infty} \left|\sum \limits_{i=1}^n \log\left(1-\frac{\bfi x}{n(a_i+z)}\right)\right| \le \lim_{n \rightarrow \infty} \frac{|x|}{n} \sum \limits_{i=1}^n \frac{1}{a_i+z} = |x| \E\left[\frac{1}{a+z}\right] \quad \text{ for } x \in \bbR \quad \mu\text{-a.s.}\end{align*} 
Therefore, it suffices to prove the following for $x \in \bbR$ $\mu$-a.s. 
\begin{align}\lim_{n \rightarrow \infty}-\sum \limits_{i=1}^n \arg\left(1-\frac{\bfi x}{n(a_i+z)}\right) - x\E\left[\frac{1}{a+z}\right] = \lim_{n \rightarrow \infty} \sum \limits_{i=1}^n \arctan\left(\frac{x}{n(a_i+z)}\right) - \frac{x}{n (a_i+z)} = 0.\label{E10}\end{align}
Since $\arctan x = \int_0^x (1+u^2)^{-1}du$, we can rewrite the second sum above as 
\begin{align*}
\sum \limits_{i=1}^n \int \limits_0^{xn^{-1}(a_i+z)^{-1}} \frac{du}{1+u^2}-\frac{x}{n (a_i+z)} &= -\sum \limits_{i=1}^n \int \limits_0^{xn^{-1}(a_i+z)^{-1}} \frac{u^2\ du}{1+u^2} 
= -\frac{x}{n} \sum \limits_{i=1}^n \int \limits_0^{(a_i+z)^{-1}} \frac{x^2v^2\ dv}{n^2+x^2v^2}, 
\end{align*}
where we changed the variables via $u = vx/n$. Pick $M > 0$. The limsup as $n \rightarrow \infty$ of the absolute value of the last sum is bounded $\mu$-a.s. by $|x|$ times  
\begin{align*}
\lim_{n \rightarrow \infty} \frac{1}{n} \sum \limits_{i=1}^n \int \limits_{0}^{(a_i+z)^{-1}} \frac{x^2v^2\ dv}{M^2+x^2v^2} =  \E\left[\int \limits_{0}^{(a+z)^{-1}} \frac{x^2 v^2 \ dv}{M^2+x^2v^2} \right],  
\end{align*}
where the a.s. convergence is due to the ergodicity of $\bfa$ and the integrability of 
\begin{align*}\int_0^{(a+z)^{-1}} \dfrac{x^2v^2 \ dv}{M^2+x^2v^2} \le \frac{1}{a+z}.\end{align*} 
The last integral is monotone in $x^2$ and vanishes as $M \rightarrow \infty$. Hence, (\ref{E10}) holds for $x \in \bbR$ $\mu$-a.s.
\end{proof}

The next proposition relates $g_z$ to $g$ through a variational formula. 
\begin{prop}
\begin{equation}\label{E8.1}g_z(1, 1) = \sup_{t \in [0, 1]} \max \{g_z(1-t, 0) + g(t, 1), g_z(0, 1-t) + g(1, t)\} \end{equation} For $z \in (-\ubar{\alpha}, \ubar{\beta})$ in the exponential model and for $z \in (\bar{\alpha}, 1/\bar{\beta})$ in the geometric model. 
\end{prop}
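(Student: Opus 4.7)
The plan is to decompose $\widehat{G}(n, n)$ according to the exit point of the optimal path from the boundary axes and then identify each piece asymptotically. Since $W(0, 0) = 0$, every directed path from $(0, 0)$ to $(n, n)$ either runs along the horizontal axis to some $(k, 0)$ with $1 \le k \le n$ and takes its first up step there, or runs along the vertical axis to some $(0, l)$ with $1 \le l \le n$ and takes its first right step there. Writing $H_k = \sum_{i=1}^{k} W(i, 0)$, $V_l = \sum_{j=1}^{l} W(0, j)$, and $G^{\ast}(a, b, m, n) = \max_{\pi \in \Pi_{a, b, m, n}} \sum_{(i, j) \in \pi} W(i, j)$ for the bulk last-passage time, this classification produces the identity
\begin{equation*}
\widehat{G}(n, n) = \max\!\left\{\max_{1 \le k \le n}\bigl(H_k + G^{\ast}(k, 1, n, n)\bigr),\ \max_{1 \le l \le n}\bigl(V_l + G^{\ast}(1, l, n, n)\bigr)\right\}.
\end{equation*}

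Next, for each fixed $u \in [0, 1]$ I would collect three asymptotics. Lemma \ref{L4} gives $\widehat{G}(n, n)/n \to g_z(1, 1)$ in $\bfP_{\bfa, \bfb}^z$-probability for $\mu$-a.e.\ $(\bfa, \bfb)$. Under $\bfP_{\bfa, \bfb}^z$ the boundary weights $W(i, 0)$ are independent exponentials with means $1/(a_i + z)$, so Kolmogorov's strong law together with ergodicity of $\bfa$ gives $H_{\lfloor n u \rfloor}/n \to u\,\E[1/(a+z)] = g_z(u, 0)$ almost surely, and similarly $V_{\lfloor n u \rfloor}/n \to g_z(0, u)$. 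For the bulk piece, $G^{\ast}(k, 1, n, n)$ depends only on weights with indices in $\bbN^2$, whose joint law under $\bfP_{\bfa, \bfb}^z$ is $\bfP_{\bfa, \bfb}$. A shift by $(k - 1, 0)$ identifies $G^{\ast}(k, 1, n, n)$ with an ordinary last-passage time from $(1, 1)$ to $(n - k + 1, n)$ driven by the environment $(\tau_{k-1} \bfa, \bfb)$; since $\mu$ is shift-invariant and the shape function from Lemma \ref{L1} depends only on the marginals, this yields $G^{\ast}(\lfloor n u \rfloor, 1, n, n)/n \to g(1 - u, 1)$ in $\bbP$-probability.

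The lower bound in the proposition is now direct. For each $t \in [0, 1]$ the decomposition forces
\[\widehat{G}(n, n) \ge H_{\lfloor n(1-t) \rfloor \vee 1} + G^{\ast}(\lfloor n(1-t) \rfloor \vee 1, 1, n, n);\]
extracting a subsequence along which all the pointwise limits above hold almost surely (possible since each limit is deterministic) and passing to $n \to \infty$ yields $g_z(1, 1) \ge g_z(1-t, 0) + g(t, 1)$. Symmetric treatment of the vertical axis followed by the supremum over $t$ gives the $\ge$ direction.

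The main obstacle is the matching upper bound, since each inner maximum runs over $\Theta(n)$ indices while the asymptotics are only pointwise in $u$. I resolve this by a monotonicity-based discretization: $k \mapsto H_k$ is nondecreasing, and $k \mapsto G^{\ast}(k, 1, n, n)$ is nonincreasing (any path from $(k+1, 1)$ extends by prepending $(k, 1) \to (k+1, 1)$, using nonnegativity of $W$). Partitioning $[0, 1]$ by $0 = u_0 < u_1 < \cdots < u_M = 1$, every $k \in [\lfloor n u_i \rfloor \vee 1,\ \lfloor n u_{i+1} \rfloor]$ satisfies
\[H_k + G^{\ast}(k, 1, n, n) \le H_{\lfloor n u_{i+1} \rfloor} + G^{\ast}(\lfloor n u_i \rfloor \vee 1, 1, n, n),\]
so after dividing by $n$ and passing to the limit on the finite collection $\{u_i\}$, the horizontal max is asymptotically bounded by $\max_{0 \le i < M}\{g_z(u_{i+1}, 0) + g(1 - u_i, 1)\}$. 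Sending $M \to \infty$ and using that $g_z(\cdot, 0)$ is linear while $g(\cdot, 1)$ is concave on $[0, 1]$ and extended continuously to the axes recovers $\sup_{t \in [0, 1]}\{g_z(1 - t, 0) + g(t, 1)\}$. Handling the vertical max identically and combining with $\widehat{G}(n, n)/n \to g_z(1, 1)$ gives the matching $\le$ direction, completing the proof.
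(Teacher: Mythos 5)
Your proposal is correct and follows essentially the same route as the paper: the same exit-point decomposition of $\widehat{G}(n,n)$ (the paper's (\ref{eq23})), the same coarse-graining of the exit location using monotonicity/nonnegativity of the weights, the limits from Lemma \ref{L4} for the boundary sums and from Lemma \ref{L1} plus stationarity of $\bbP$ for the shifted bulk terms (with the same subsequence extraction to upgrade in-probability limits), and the mesh error vanishing by linearity of $g_z$. The only cosmetic difference is that the paper gets the $\ge$ direction in one line from $g \le g_z$ and linearity of $g_z$, whereas you rederive it from the decomposition, which is harmless.
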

\begin{proof}
Fix $z \in (-\ubar{\alpha}, \ubar{\beta})$ in the exponential model. Since $g \le g_z$ and $g_z$ is linear, (\ref{E8.1}) with $\ge$ instead of $=$ is immediate. For the opposite inequality, we adapt the argument in \cite[Proposition~2.7]{Seppalainen}. It follows from (\ref{eq51}) and (\ref{eq42}) that  
\begin{equation}
\label{eq23}
\widehat{G}(n, n) = \max \limits_{k \in [n]} \max \{\widehat{G}(k, 0) + G(n-k+1, n) \circ \theta_{k-1, 0}, \widehat{G}(0, k) + G(n, n-k+1) \circ \theta_{0, k-1}\}. 
\end{equation}
Let $L \in \bbN$ and consider $n > L$ large enough so that $\lc (i+1)n/L\rc > \lc in/L\rc$ for $0 \le i < L$. 
For any $k \in [n]$ there exists some $0 \le i < L$ such that $\lc in/L \rc < k \le \lc (i+1)n/L\rc$, and the weights are nonnegative. Therefore, (\ref{eq23}) implies that 
\begin{equation}
\label{eq24}
\begin{aligned}
\widehat{G}(n, n) \le \max \limits_{0 \le i < L} \max \{&\widehat{G}(\lc (i+1)n/L\rc, 0) + G(\lf (1-i/L)n \rf, n) \circ \theta_{\lc in/L \rc, 0},\\
&\widehat{G}(0, \lc (i+1)n/L\rc) + G(n, \lf (1-i/L)n \rf) \circ \theta_{0, \lc in/L \rc} \}.
\end{aligned}
\end{equation} 
By stationarity of $\bbP$, we have the following limits in $\bbP$-probability. 
\begin{equation}
\label{E15}
\begin{aligned}
\lim_{n \rightarrow \infty} \frac{G(\lf (1-i/L)n \rf, n) \circ \theta_{\lc in/L \rc, 0}}{n} &= g\left(1-i/L, 1\right) \\ \lim_{n \rightarrow \infty} \frac{G(n, \lf (1-i/L)n \rf) \circ \theta_{0, \lc in/L \rc}}{n} &= g(1, 1-i/L)
\end{aligned}
\end{equation}
Hence, these limits are $\bbP$-a.s. and, consequently, $\bfP_{\bfa, \bfb}$ a.s. $\mu$-a.s. if $n \rightarrow \infty$ along a suitable sequence $(n_k)_{k \in \bbN}$. Also, by Lemma \ref{L4}, there is a subsequence $(n'_k)_{k \in \bbN}$ in $\bbN$ $\mu$-a.s. such that $\bfP_{\bfa, \bfb}^z$ a.s. 
\begin{equation}
\label{E14}
\begin{aligned}
\lim_{k \rightarrow \infty} \frac{\widehat{G}(\lc (i+1)n'_k/L\rc, 0)}{n'_k} = g_z(i+1/L, 0) \quad \lim_{k \rightarrow \infty} \frac{\widehat{G}(0, \lc (i+1)n'_k/L\rc)}{n'_k} = g_z(0, i+1/L)
\end{aligned}
\end{equation}
Because $\bfP_{\bfa, \bfb}$ is a projection of $\bfP_{\bfa, \bfb}^z$, we can choose $(\bfa, \bfb)$ such that (\ref{E15}) and (\ref{E14}) hold $\bfP_{\bfa, \bfb}^z$-a.s. Hence, we obtain from (\ref{eq24}) that 
\[
\begin{aligned}
g_z(1, 1) &\le \max \limits_{0 \le i < L}\max\{g_z((i+1)/L, 0) + g(1-i/L, 1), \ g_z(0, (i+1)/L) + g(1, 1-i/L)\} \\
&\le \sup \limits_{0 \le t \le 1} \max \{g(t, 1) + g_z(1-t, 0), \ g(1, t) + g_z(0, 1-t)\} + \frac{\E[(a+z)^{-1}]+\E[(b-z)^{-1}]}{L}.
\end{aligned}
\]
Finally, let $L \rightarrow \infty$. The geometric model is treated similarly. 
\end{proof}

\section{Variational characterization of the shape function}
\label{S5}

We now prove Theorems \ref{T1} and \ref{T2}. The assumption $\ubar{\alpha}+\ubar{\beta} > 0$ is in force until the proof of Theorem \ref{T2}. We begin with computing $g$ on the boundary. Recall that $g$ is extended to the boundary of $\bbR_+^2$ through limits. By homogeneity, it suffices to determine $g(1, 0)$ and $g(0, 1)$. 
\begin{lem}
\label{l7}
\[
\begin{aligned}
g(1, 0) = \E\left[\frac{1}{a+\ubar{\beta}}\right] \qquad g(0, 1) = \E\left[\frac{1}{b+\ubar{\alpha}}\right]. \\
\end{aligned}
\]
\end{lem}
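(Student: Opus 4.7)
The plan is to match upper and lower bounds for $g(1, 0)$, which by the extension convention equals $\lim_{t \downarrow 0} g(1, t)$; the formula for $g(0, 1)$ follows from the symmetric argument that exchanges the roles of $\bfa, \bfb$ and of rows and columns. For the upper bound I use that nonnegativity of the weights gives $G \le \widehat{G}$, so Lemma \ref{L4} yields $g(s, t) \le g_z(s, t) = s \E[(a+z)^{-1}] + t\E[(b-z)^{-1}]$ for every $z \in (-\ubar{\alpha}, \ubar{\beta})$. Specializing to $s = 1$ and letting $t \downarrow 0$ gives $g(1, 0) \le \E[(a+z)^{-1}]$; sending $z \uparrow \ubar{\beta}$ and applying monotone convergence to the decreasing integrands $1/(a + z)$ produces $g(1, 0) \le \E[(a + \ubar{\beta})^{-1}]$.

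For the lower bound the key observation is that even an infinitesimal amount of vertical room allows the path to detour into a row with $b_{j_0}$ nearly equal to $\ubar{\beta}$, so the row-$j_0$ weights look like rate-$(a_i + \ubar{\beta})$ exponentials. Fix $\epsilon > 0$. Since $\ubar{\beta}$ is the left endpoint of $\supp \beta$, $\beta([\ubar{\beta}, \ubar{\beta} + \epsilon)) > 0$, and ergodicity of $\bfb$ provides $\mu$-a.s.\ a random but $n$-independent index $j_0 = j_0(\bfb) < \infty$ with $b_{j_0} < \ubar{\beta} + \epsilon$. For $t > 0$ and $n$ large enough that $j_0 \le \lfloor nt \rfloor$, the path in $\Pi_{1, 1, n, \lfloor nt \rfloor}$ that rises in column $1$ up to $(1, j_0)$, crosses row $j_0$ to $(n, j_0)$, and then rises in column $n$ to $(n, \lfloor nt \rfloor)$ yields, after discarding the two nonnegative vertical legs,
\[ G(n, \lfloor nt \rfloor) \ge \sum_{i = 2}^{n} W(i, j_0). \]

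To evaluate this row sum I would couple the weights against a uniformly more favorable row: on an enlarged probability space take i.i.d.\ rate-$1$ exponentials $U_i$ independent of $(\bfa, \bfb)$ and set $W(i, j_0) = U_i / (a_i + b_{j_0})$ together with $\widetilde{W}_i = U_i / (a_i + \ubar{\beta} + \epsilon)$, so that $W(i, j_0) \ge \widetilde{W}_i$ pointwise because $b_{j_0} \le \ubar{\beta} + \epsilon$. Conditional on $\bfa$, the $\widetilde{W}_i$ are independent with mean $1/(a_i + \ubar{\beta} + \epsilon)$ and variance uniformly bounded (the standing assumption $\ubar{\alpha} + \ubar{\beta} > 0$ forces $a_i + \ubar{\beta} + \epsilon \ge \epsilon$), so Kolmogorov's strong law centers the sum on its conditional mean, and Birkhoff's theorem applied to the $\bfa$-marginal (whose ergodicity under $\tau_1$ descends from joint ergodicity of $\tau_1 \times \tau_1$ by testing against $\bfa$-cylinder events) converts the conditional mean to $\E[(a + \ubar{\beta} + \epsilon)^{-1}]$. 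Taking $n \to \infty$ gives $g(1, t) \ge \E[(a + \ubar{\beta} + \epsilon)^{-1}]$, and letting $t \downarrow 0$ followed by $\epsilon \downarrow 0$ (monotone convergence again) completes the proof. The main obstacle I anticipate is this final ergodic step: extracting pure $\bfa$-ergodicity from the hypothesis of joint ergodicity of $\tau_k \times \tau_l$ for $k, l \in \bbN$ and confirming that the variance and ergodic-average control hold uniformly in the random, $\bfb$-measurable choice of $j_0$.
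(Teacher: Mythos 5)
Your proof is correct, and the overall strategy coincides with the paper's: the upper bound is obtained exactly as in the paper from $g \le g_z$ with $z \uparrow \ubar{\beta}$, and the lower bound rests on the same key idea, namely that a path detouring through a single row whose $b$-value is within $\epsilon$ of $\ubar{\beta}$ already carries weight $n\E[(a+\ubar{\beta}+\epsilon)^{-1}] + o(n)$. The implementations of the lower bound differ, though. The paper fixes the row $j=1$: since $\mu(b_1 \le \ubar{\beta}+\epsilon) > 0$ and the shape limit is deterministic and holds for $\mu$-a.e.\ environment, it selects one realization $(\bfa, \bfb)$ with $b_1 \le \ubar{\beta}+\epsilon$ on which both (\ref{E17}) and the already-proved weak law (\ref{E16}) for the boundary weights under $\bfQ_{\bfa}^{\ubar{\beta}+\epsilon}$ hold, and then transfers the estimate by stochastic domination of product measures. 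You instead work $\mu$-a.s., locating a random ($\bfb$-measurable, $n$-independent) row $j_0$ with $b_{j_0} < \ubar{\beta}+\epsilon$ via ergodicity of the $\bfb$-marginal (which does follow from joint ergodicity, since a $\tau_l$-invariant event for $\bfb$ lifts to a $\tau_k\times\tau_l$-invariant product event — the same remark settles your worry about $\bfa$), and you replace the paper's appeal to Lemma \ref{L4} by an explicit coupling $W(i,j_0) = U_i/(a_i+b_{j_0}) \ge U_i/(a_i+\ubar{\beta}+\epsilon)$ together with Kolmogorov's strong law (variances bounded by $\epsilon^{-2}$) and Birkhoff's theorem for $n^{-1}\sum_i (a_i+\ubar{\beta}+\epsilon)^{-1}$. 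This buys a self-contained argument that does not lean on the stationary boundary process or the characteristic-function law of large numbers of Lemma \ref{L4}, and it avoids the "choose a favorable environment realization" step; the paper's route is shorter given that (\ref{E16}) is already available and reuses the stationary-model machinery. The remaining details you flag are benign: the coupling is legitimate because $j_0$ is measurable with respect to $(\bfa,\bfb)$ and hence, conditionally on the environment, the row-$j_0$ weights are independent exponentials with rates $a_i+b_{j_0}$; and the limits $t \downarrow 0$, $\epsilon \downarrow 0$ (monotone convergence, using $\ubar{\alpha}+\ubar{\beta}>0$ for integrability in the upper bound) close the argument as claimed.
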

\begin{proof}
We have $g(1, 0) \le g_z(1, 0) = \E[(a+z)^{-1}]$ for all $z \in (-\ubar{\alpha}, \ubar{\beta})$. Letting $z \uparrow \ubar{\beta}$ yields the upper bound $g(1, 0) \le \E[(a+\ubar{\beta})^{-1}]$. Now the lower bound. Let $\epsilon > 0$. By Lemma \ref{L1}, (\ref{E16}) and since $\mu(b_1 \le \ubar{\beta}+\epsilon) > 0$, there exists $(\bfa, \bfb)$ such that $b_1 \le \ubar{\beta}+\epsilon$ and 
\begin{align}
\lim_{n \rightarrow \infty} \frac{G(n, \lf n\epsilon \rf)}{n} &= g(1, \epsilon) \quad \bfP_{\bfa, \bfb}\text{-a.s.}\label{E17}\\
\lim_{n \rightarrow \infty} \frac{1}{n}\sum_{i=1}^{n} I(i, 0) &= \E\left[\frac{1}{a+\ubar{\beta}+\epsilon}\right]\quad \text{ in } \bfQ_{\bfa}^{\ubar{\beta}+\epsilon}\text{-probability.} \label{E18}
\end{align}
($\bfQ_{\bfa}^z$ is defined immediately after (\ref{E16})). 
The distribution of $\{W(i, 1): 1 \le i \le n\}$ under $\bfP_{\bfa, \bfb}$ stochastically dominates the distribution of $\{I(i, 0): 1 \le i \le n\}$ under $\bfQ_{\bfa}^{\ubar{\beta}+\epsilon}$ as these distributions have product forms and $i$th marginals are exponentials with rates $a_i+b_1 \le a_i+\ubar{\beta}+\epsilon$ for $i \in [n]$. Therefore, for $x \in \bbR$ and $n \ge 1/\epsilon$,  
\begin{align*}
\bfP_{\bfa, \bfb}(G(n, \lf n\epsilon \rf) \ge nx) \ge \bfP_{\bfa, \bfb}\left(\sum_{i=1}^n W(i, 1) \ge nx\right) \ge \bfQ_{\bfa}^{\ubar{\beta}+\epsilon}\left(\sum_{i=1}^n I(i, 0) \ge nx \right). 
\end{align*}
Set $x = \E[(a+\ubar{\beta}+\epsilon)^{-1}]-\epsilon$ and let $n \rightarrow \infty$. By (\ref{E17}) and (\ref{E18}), we obtain $g(1, \epsilon) \ge x$. Sending $\epsilon \downarrow 0$ gives $g(1, 0) \ge \E[(a+\ubar{\beta})^{-1}]$. Computation of $g(0, 1)$ is similar.  
\end{proof}

We now extract $g$ from (\ref{E8}). For this, we will only use the boundary values of $g$ provided in Lemma \ref{l7}, and that $A(z) = \E[(a+z)^{-1}]$ and $B(z) = \E[(b-z)^{-1}]$ are continuous, stricly monotone functions on $(-\ubar{\alpha}, \ubar{\beta})$. 
\begin{lem} 
\label{l8}
Let $r$ be a positive, continuous function on $[0, \pi/2]$. For $z \in (-\ubar{\alpha}, \ubar{\beta})$,  
\[\sup_{0 \le \theta \le \pi/2} \{g(x(\theta), y(\theta))-g_z(x(\theta), y(\theta))\} = 0, \]
where $(x(\theta), y(\theta)) = (r(\theta)  \cos \theta,  r(\theta) \sin \theta)$ for  $0 \le \theta \le \pi/2$. 
\end{lem}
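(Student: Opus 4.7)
The plan is to read off from the variational formula (\ref{E8.1}) a specific ray from the origin on which $g$ and $g_z$ coincide, and then exploit the fact that the curve $\theta \mapsto (r(\theta)\cos\theta, r(\theta)\sin\theta)$ meets every ray in the closed first quadrant at exactly one point (namely the one whose polar angle is $\theta$, by positivity of $r$). Since $g \le g_z$ pointwise (from $G \le \widehat G$ together with Lemma \ref{L4}), the displayed supremum is already $\le 0$, so it is enough to produce a single $\theta^* \in [0, \pi/2]$ at which equality holds.

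Setting $A(z) = \E[(a+z)^{-1}]$ and $B(z) = \E[(b-z)^{-1}]$, the function $g_z$ is linear with $g_z(s, t) = sA(z) + tB(z)$. Substituting $g_z(1, 1) = A(z) + B(z)$, $g_z(1-t, 0) = (1-t)A(z)$, and $g_z(0, 1-t) = (1-t)B(z)$ into (\ref{E8.1}) and subtracting $A(z) + B(z)$ from both sides rearranges the identity as
\[
\sup_{t \in [0, 1]} \max\bigl\{\, g(t, 1) - g_z(t, 1),\ g(1, t) - g_z(1, t)\,\bigr\} = 0.
\]
The slices $t \mapsto g(t, 1)$ and $t \mapsto g(1, t)$ are concave on $(0, \infty)$ by Lemma \ref{L1} and continuous on $[0, 1]$ after the limiting extension to the axes, so the maximum is a continuous function of $t$ on the compact interval $[0, 1]$ and the supremum is attained at some $t^*$. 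Thus there is $p^* \in \{(t^*, 1),\, (1, t^*)\}$ with $g(p^*) = g_z(p^*)$. Lemma \ref{l7} combined with $z \in (-\ubar{\alpha}, \ubar{\beta})$ gives the strict inequalities $g(1, 0) = \E[(a + \ubar{\beta})^{-1}] < A(z) = g_z(1, 0)$ and $g(0, 1) < g_z(0, 1)$, ruling out $t^* = 0$. Hence $p^*$ lies in the open first quadrant.

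By $1$-homogeneity of $g$ (Lemma \ref{L1}) and $g_z$ (linearity), the equality $g = g_z$ extends from $p^*$ to every positive scalar multiple. Writing $p^* = \lvert p^* \rvert (\cos \theta^*, \sin \theta^*)$ with $\theta^* \in (0, \pi/2)$, the curve point $(x(\theta^*), y(\theta^*)) = r(\theta^*)(\cos \theta^*, \sin \theta^*)$ lies on this ray, so $g(x(\theta^*), y(\theta^*)) = g_z(x(\theta^*), y(\theta^*))$ and the proof is complete. I expect the only delicate step to be the rearrangement of (\ref{E8.1}) into a form where $\sup = 0$ can be read off, together with the boundary strict inequalities from Lemma \ref{l7} that force $p^*$ into the open quadrant; homogeneity then does the rest. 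Notably, the argument bypasses the symmetry $g(s, t) = g(t, s)$ used in \cite{Seppalainen}, which fails here unless $\alpha = \beta$.
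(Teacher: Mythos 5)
Your proof is correct and follows essentially the same route as the paper: both start from (\ref{E8.1}), exploit the linearity of $g_z$ together with continuity and compactness to produce a point where $g = g_z$, and then transfer that equality to the given curve via homogeneity and the pointwise bound $g \le g_z$. The only (harmless) differences are that you locate the equality point on the boundary of the unit square and then rescale onto the curve, whereas the paper rescales inside the supremum and finds the maximizer directly on the curve, and your extra step ruling out the axes via Lemma \ref{l7} is not needed (equality on an axis would still transfer to the curve at $\theta = 0$ or $\pi/2$) but does no harm.
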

\begin{proof}
We can rewrite (\ref{E8.1}) as 
\begin{align}
A(z) + B(z) &= \sup \limits_{\pi/4 \le \theta \le \pi/2} \{(1-\cot \theta) A(z) + g(\cot \theta, 1)\} \vee \sup \limits_{0 \le \theta \le \pi/4} \{(1-\tan \theta)B(z) + g(1, \tan \theta)\} \nonumber\\
&= \sup \limits_{\pi/4 \le \theta \le \pi/2} \left\{\left(1-\frac{x(\theta)}{y(\theta)}\right) A(z) + g\left(\frac{x(\theta)}{y(\theta)}, 1\right) \right\} \nonumber\\ 
&\ \vee \sup \limits_{0 \le \theta \le \pi/4} \left\{\left(1-\frac{y(\theta)}{x(\theta)}\right)B(z) + g\left(1, \frac{y(\theta)}{x(\theta)}\right)\right\}, \nonumber
\end{align}
where we use that $x$ and $y$ are nonzero, respectively, on the intervals $[0, \pi/4]$ and $[\pi/4, \pi/2]$. Collecting the terms on the right-hand side and using homogeneity, we obtain that  
\begin{equation}
\label{eq26}
\begin{aligned}
0 = \max \bigg \{&\sup \limits_{\pi/4 \le \theta \le \pi/2} \frac{1}{y(\theta)}\{-x(\theta)A(z) -y(\theta) B(z)+ g(x(\theta), y(\theta))\},\\ &\sup \limits_{0 \le \theta \le \pi/4} \frac{1}{x(\theta)}\{-x(\theta)A(z) -y(\theta) B(z)+ g(x(\theta), y(\theta))\} \bigg \}.
\end{aligned}
\end{equation}
The expressions inside the supremums in (\ref{eq26}) are continuous functions of $\theta$ over closed intervals. Hence, there exists $\theta_z \in [0, \pi/2]$ such that 
\begin{align*}
0 = -x(\theta_z)A(z) - y(\theta_z) B(z) + g(x(\theta_z), y(\theta_z)) = \sup \limits_{0 \le \theta \le \pi/2} \{-x(\theta)A(z) -y(\theta) B(z)+ g(x(\theta), y(\theta))\}, 
\end{align*}
where the second equality is due to $g \le g_z$.  
\end{proof}

\begin{cor} 
\label{c2}
\begin{equation}
\label{eq89}
B(z) = \sup \limits_{0 \le s < \infty} \{-sA(z) + g(s, 1)\} \quad \text{ for } z \in (-\ubar{\alpha}, \ubar{\beta}).  
\end{equation}
\end{cor}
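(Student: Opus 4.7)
The plan is to convert the one-parameter variational identity in Lemma \ref{l8} into the desired formula by restricting attention to rays that exit through the line $t=1$. One inequality is immediate: since $g \le g_z$ pointwise (as noted right after Lemma \ref{L4}) and $g_z$ is linear, every $s \ge 0$ satisfies $-sA(z)+g(s,1) \le -sA(z)+g_z(s,1) = B(z)$, so the supremum on the right of (\ref{eq89}) is bounded above by $B(z)$.

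For the matching lower bound, I would apply Lemma \ref{l8} with the constant choice $r \equiv 1$, producing some $\theta_z \in [0, \pi/2]$ at which $g(\cos\theta_z, \sin\theta_z) = g_z(\cos\theta_z, \sin\theta_z)$. The main obstacle -- and essentially the only place where care is needed -- is to argue that $\theta_z$ lies in the open interval $(0, \pi/2)$, so that $\cot\theta_z \in [0, \infty)$ is a legitimate candidate for the optimizing $s$. I would rule out the endpoints using Lemma \ref{l7}: at $\theta = 0$, equality would read $\E[(a+\ubar{\beta})^{-1}] = \E[(a+z)^{-1}]$, contradicting $z < \ubar{\beta}$ by the strict monotonicity of $A$; symmetrically, $\theta_z = \pi/2$ would force $\E[(b+\ubar{\alpha})^{-1}] = \E[(b-z)^{-1}]$, contradicting $z > -\ubar{\alpha}$ by the strict monotonicity of $B$.

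With $\theta_z$ confined to the interior, homogeneity of $g$ and $g_z$ gives $g(\cot\theta_z, 1) = g_z(\cot\theta_z, 1) = \cot\theta_z \cdot A(z) + B(z)$ after dividing through by $\sin\theta_z > 0$. Choosing $s = \cot\theta_z$ in the supremum on the right of (\ref{eq89}) then yields exactly $B(z)$, which matches the upper bound and completes the identification. The whole argument relies only on Lemma \ref{l8}, the boundary values from Lemma \ref{l7}, and the strict monotonicity of $A$ and $B$ on $(-\ubar{\alpha}, \ubar{\beta})$ recorded at the start of Section \ref{S5}.
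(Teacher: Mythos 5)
Your argument is correct and rests on the same two pillars as the paper's proof, namely Lemma \ref{l8} combined with the boundary values from Lemma \ref{l7}, but the extraction step is genuinely different. The paper applies Lemma \ref{l8} to the L-shaped curve $\{(s,1): 0 \le s \le S\} \cup \{(S,t): 0 \le t \le 1\}$ and uses only the single boundary value $g(1,0) = A(\ubar{\beta}) < A(z)$ to show, via $g(S,t) - g_z(S,t) \le S\bigl(g(1,1/S) - A(z)\bigr) \rightarrow -\infty$, that the vertical piece drops out for large $S$; letting $S \rightarrow \infty$ then gives (\ref{eq89}) directly. You instead take the quarter circle $r \equiv 1$, use attainment of the supremum in Lemma \ref{l8}, and exclude both corners by Lemma \ref{l7} together with strict monotonicity of $A$ and $B$, after which homogeneity rescales the interior maximizer to the line $t = 1$. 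What your route buys is the avoidance of the truncation and the limit in $S$ (and it shows, as a small bonus, that the supremum in (\ref{eq89}) is attained at some $s \in (0,\infty)$); what it costs is that attainment is not literally part of the statement of Lemma \ref{l8}, so you should add one line: the function $\theta \mapsto g(\cos\theta, \sin\theta) - g_z(\cos\theta, \sin\theta)$ is continuous on the compact interval $[0, \pi/2]$, including at the endpoints, which follows from monotonicity and homogeneity (for instance $\cos\theta \, g(1,0) \le g(\cos\theta, \sin\theta) \le g(1, \sin\theta)$ near $\theta = 0$) and is the same continuity invoked inside the paper's proof of Lemma \ref{l8}. With that sentence in place, your proof is complete.
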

\begin{proof}
Let $S > 0$. The set $\{(s, 1): 0 \le s \le S\} \cup \{(S, t): 0 \le t \le 1\}$ is the image of a curve $\theta \mapsto (r(\theta)\cos\theta, r(\theta)\sin\theta)$ for $[0, \pi/2]$ with continuous and positive $r$. Hence, by Lemma \ref{l8},  
\begin{equation}
\label{eq88}
0 = \max \{ \sup \limits_{0 \le s \le S} \{g(s, 1) - g_z(s, 1)\}, \sup \limits_{0 \le t \le 1} \{g(S, t) - g_z(S, t)\}\}.
\end{equation}
Using homogeneity and Lemma \ref{l7}, we observe that  
\begin{equation}
\label{90}
\begin{aligned}
g(S, t)-g_z(S, t) &= g(S, t) - SA(z)-tB(z) \le S(g(1, 1/S) - A(z)) \rightarrow -\infty \quad \text{ as } S \rightarrow \infty.
\end{aligned}
\end{equation}
Hence, the second supremum in (\ref{eq88}) can be dropped provided that $S$ is sufficiently large, which results in 
$
0 = \sup_{0 \le s \le S} \{g(s, 1) - g_z(s, 1)\}.
$
This equality remains valid if $S$ is replaced with $\infty$ by (\ref{90}) with $t = 1$. Rearranging terms gives (\ref{eq89}). 
\end{proof}

\begin{proof}[Proof of Theorem \ref{T1}]
Define the function $\gamma:\bbR \rightarrow \bbR \cup \{\infty\}$ by $\gamma(s) = -g(s, 1)$ for $s \ge 0$ and $\gamma(s) = \infty$ for $s < 0$. By Proposition \ref{L1}, $\gamma$ is nonincreasing, continuous and convex on $[0, \infty)$ and completely determines $g$. Let $\gamma^*$ denote the convex conjugate of $\gamma$, that is,  
\begin{align}
\gamma^*(x) &= \sup \limits_{s \in \bbR} \{sx-\gamma(s)\} = \sup \limits_{s \ge 0} \{sx - \gamma(s)\} \quad \text{ for } x \in \bbR. \label{eq30}
\end{align}
Let $f$ be the function whose graph is the image of the curve $z \mapsto (-A(z), B(z))$. That is, $f$ is defined on the interval 
$(-A(-\ubar\alpha), -A(\ubar{\beta}))$ and is given by the formula $f(x) = B \circ A^{-1}(-x)$. %properties of f. 
By Corollary \ref{c2}, 
\begin{equation}\label{eq29} f(x) = \sup \limits_{0 \le s < \infty} \{sx-\gamma(s)\} \quad \text{ for } x \in (-A(-\ubar{\alpha}), -A(\ubar{\beta}))\end{equation} Comparison of (\ref{eq30}) and (\ref{eq29}) shows that $\gamma^*$ coincides with $f$ on $(-A(-\ubar{\alpha}), -A(\ubar{\beta}))$. 
Since $\gamma$ is a lower semi-continuous, proper convex function on the real line, by the Fenchel-Moreau theorem, $\gamma$ equals the convex conjugate of $\gamma^*$, hence, 
\begin{equation}
\label{eq31}
\gamma(s) = \sup \limits_{x \in \bbR} \{sx - \gamma^*(x)\} \quad \text{ for } s \in \bbR
\end{equation} 

To prove the result, we need to show the supremum in (\ref{eq31}) can be taken over the interval $(-A(-\ubar{\alpha}), -A(\ubar{\beta}))$ instead of the real line. It is clear from (\ref{eq30}) that $\gamma^*$ is nondecreasing and is bounded below by $-\gamma(0) = g(0, 1) = B(-\ubar{\alpha})$. Since $\gamma^*$ agrees with $f$ on  $(-A(-\ubar{\alpha}), -A(\ubar{\beta}))$, 
\begin{equation}
\label{eq32}
\begin{aligned}
B(-\ubar{\alpha}) &\le \gamma^*(-A(-\ubar{\alpha})) \le \lim \limits_{x \downarrow -A(-\ubar{\alpha})} f(x) \\ 
&= \lim \limits_{x \downarrow -A(-\ubar{\alpha})} B \circ A^{-1}(-x) = \lim \limits_{z \rightarrow -\ubar{\alpha}} B(z)  = B(-\ubar{\alpha}), 
\end{aligned}
\end{equation}
where we used continuity of $A^{-1}$ and $B$. Hence, $\gamma^*(x) = B(-\ubar{\alpha})$ for $x \le -A(-\ubar{\alpha})$. On the other hand, if $x > -A(\ubar{\beta}) = -g(1, 0)$ then 
$\gamma^*(x) = \infty$ by (\ref{eq30}) because 
\[
\lim \limits_{s \rightarrow \infty} sx - \gamma(s) = \lim \limits_{s \rightarrow \infty} s(x + g(1, 1/s)) = \infty. 
\]
Finally, we compute $\gamma^*$ at $-A(\ubar{\beta})$. Being a convex conjugate, %lemma 
$\gamma^*$ is lower semi-continuous. Since $\gamma^*$ is also nondecreasing, 
$\lim_{y \uparrow x} \gamma^*(y) = \gamma^*(x)$ for any $x \in \bbR$. Then, proceeding as in (\ref{eq32}), 
\[\gamma^*(-A(\ubar{\beta})) = \lim \limits_{x \uparrow -A(\ubar{\beta})}f(x) = B(\ubar{\beta}).\] 
We conclude that the function $x \mapsto sx - \gamma^*(x)$ is increasing for $x \le -A(-\ubar{\alpha})$ and is $-\infty$ for $x > -A(\ubar{\beta})$. Moreover, the left- and right-hand limits agree with the value of the function at $-A(\ubar{\beta})$ and $-A(-\ubar{\alpha})$, respectively. Hence, by (\ref{eq31}), 
\[
\begin{aligned}
\gamma(s) &= \sup \limits_{s \in (-A(-\ubar{\alpha}), -A(\ubar{\beta}))} \{sx - \gamma^*(x)\} = \sup \limits_{z \in (-\ubar{\alpha}, \ubar{\beta})} \{-sA(z)-B(z)\} = -\inf \limits_{z \in (-\ubar{\alpha}, \ubar{\beta})} \{sA(z) + B(z)\}, 
\end{aligned}
\]
which implies (\ref{eq3}). 
\end{proof}

\begin{proof}[Proof of Theorem \ref{T2}]
Introduce $\delta > 0$ and let $\varphi: \bbR_+^{\bbN} \rightarrow \bbR_+^{\bbN}$ denote the map $(c_n)_{n \in \bbN} \mapsto (c_n \vee \delta)_{n \in \bbN}$. Because $\varphi$ commutes with the shift $\tau_1$, $\varphi(\bfa)$ and $\varphi(\bfb)$ are stationary sequences in $(0, \infty)$. Moreover, for each $k, l \in \bbN$, the distribution $\mu_\delta$ of $(\varphi(\bfa), \varphi(\bfb))$ is ergodic with respect to $\tau_k \times \tau_l$. To see this, suppose that $B = (\tau_k \times \tau_l)^{-1}(B)$ for some $k, l \in \bbN$ and Borel set $B \subset \bbR_+^{\bbN} \times \bbR_+^{\bbN}$. Then $(\varphi \times \varphi)^{-1}(B) = (\varphi \times \varphi)^{-1}((\tau_k \times \tau_l)^{-1}(B)) = (\tau_k \times \tau_l)^{-1}((\varphi \times \varphi)^{-1}(B))$. Hence, by the ergodicity of $\mu$, we get $\mu_\delta(B) = \mu((\varphi \times \varphi)^{-1}(B)) \in \{0, 1\}$. 
 
Let $\alpha_\delta$ and $\beta_\delta$ denote the marginal distributions of $\varphi(\bfa)$ and $\varphi(\bfb)$, respectively. Then $\ubar{{\alpha_\delta}} = \ubar{{\beta_\delta}} = \delta$. 
Applying Theorem \ref{T1} gives
\begin{align}g^{\alpha_\delta, \beta_\delta}(s, t) = \inf \limits_{z \in (-\delta, \delta)} \left\{s \E\left[\frac{1}{a \vee \delta + z}\right] + t \E\left[\frac{1}{b \vee \delta-z}\right]\right\}.\label{E9}\end{align}
Since $\bfP_{\bfa, \bfb}$ stochastically dominates $\bfP_{\varphi(\bfa), \varphi(\bfb)}$, we have $g^{\alpha_\delta, \beta_\delta}(s, t) \le g^{\alpha, \beta}(s, t)$ for $s, t \ge 0$. Using this and (\ref{E9}), we obtain 
\begin{align*}g^{\alpha, \beta}(s, t) \ge \inf \limits_{z \in (-\delta, \delta)} \left\{s \E\left[\frac{1}{a \vee \delta' + z}\right] + t \E\left[\frac{1}{b \vee \delta'-z}\right]\right\}, \end{align*}
where we fix $\delta' > \delta$. Because the expression inside the infimum is continuous in $z$, letting $\delta \downarrow 0$ yields $g^{\alpha, \beta}(s, t) \ge s\E[(a \vee \delta')^{-1}] + t\E[(b \vee \delta')^{-1}]$ for $s, t \ge 0$. 
Then, by monotone convergence, letting $\delta' \rightarrow 0$ results in
\begin{equation*}
g^{\alpha, \beta}(s, t) \ge s\E\left[\frac{1}{a}\right] + t\E\left[\frac{1}{b}\right] \quad \text{ for } s,t \ge 0. 
\end{equation*} 
The opposite inequality is noted after Lemma \ref{L4}.  
\end{proof}

\bibliographystyle{habbrv}
\bibliography{CGM_Shape}

\begin{thebibliography}{10}

\bibitem{AuffingerDamron}
A.~Auffinger and M.~Damron.
\newblock Differentiability at the edge of the percolation cone and related
  results in first-passage percolation.
\newblock {\em Probab. Theory Related Fields}, 156(1-2):193--227, 2013.

\bibitem{Balasz}
M.~Bal{\'a}zs, E.~Cator, and T.~Sepp{\"a}l{\"a}inen.
\newblock Cube root fluctuations for the corner growth model associated to the
  exclusion process.
\newblock {\em Electron. J. Probab.}, 11:no. 42, 1094--1132 (electronic), 2006.

\bibitem{BorodinPeche}
A.~Borodin and S.~P{\'e}ch{\'e}.
\newblock Airy kernel with two sets of parameters in directed percolation and
  random matrix theory.
\newblock {\em J. Stat. Phys.}, 132(2):275--290, 2008.

\bibitem{Calder}
J.~Calder.
\newblock Directed last passage percolation with discontinuous weights.
\newblock {\em J. Stat. Phys.}, 158(4):903--949, 2015.

\bibitem{CohnElkiesPropp}
H.~Cohn, N.~Elkies, and J.~Propp.
\newblock Local statistics for random domino tilings of the {A}ztec diamond.
\newblock {\em Duke Math. J.}, 85(1):117--166, 1996.

\bibitem{Corwin}
I.~Corwin.
\newblock The {K}ardar-{P}arisi-{Z}hang equation and universality class.
\newblock {\em Random Matrices Theory Appl.}, 1(1):1130001, 76, 2012.

\bibitem{DurrettLiggett}
R.~Durrett and T.~M. Liggett.
\newblock The shape of the limit set in {R}ichardson's growth model.
\newblock {\em Ann. Probab.}, 9(2):186--193, 1981.

\bibitem{Emrah}
E.~Emrah.
\newblock The shape functions of certain exactly solvable inhomogeneous planar
  corner growth models.
\newblock 2015, ar{X}iv:1502.06986v1.

\bibitem{EmrahJanjigian}
E.~Emrah and C.~Janjigian.
\newblock Large deviations for some corner growth models with inhomogeneity.
\newblock 2015, ar{X}iv:1509.02234.

\bibitem{GeorSeppRass2}
N.~Georgiou, F.~Rassoul-Agha, and T.~Sepp{\"a}l{\"a}inen.
\newblock Stationary cocycles for the corner growth model.
\newblock 2014, ar{X}iv:1404.7786v2.

\bibitem{GeorSeppRass}
N.~Georgiou, F.~Rassoul-Agha, and T.~Sepp{\"a}l{\"a}inen.
\newblock Variational formulas and cocycle solutions for directed polymer and
  percolation models.
\newblock 2015, ar{X}iv:1311.0316.

\bibitem{Johansson}
K.~Johansson.
\newblock Shape fluctuations and random matrices.
\newblock {\em Comm. Math. Phys.}, 209(2):437--476, 2000.

\bibitem{Johansson2}
K.~Johansson.
\newblock Random growth and random matrices.
\newblock In {\em European {C}ongress of {M}athematics, {V}ol. {I}
  ({B}arcelona, 2000)}, volume 201 of {\em Progr. Math.}, pages 445--456.
  Birkh\"auser, Basel, 2001.

\bibitem{Johansson4}
K.~Johansson.
\newblock Discrete polynuclear growth and determinantal processes.
\newblock {\em Comm. Math. Phys.}, 242(1-2):277--329, 2003.

\bibitem{Johansson3}
K.~Johansson.
\newblock Random matrices and determinantal processes.
\newblock In {\em Mathematical statistical physics}, pages 1--55. Elsevier B.
  V., Amsterdam, 2006.

\bibitem{JockuschProppShor}
W.~Josckusch, J.~Propp, and P.~Shor.
\newblock Random domino tilings and the arctic circle theorem.
\newblock 1998, ar{X}iv:math/9801068v1.

\bibitem{Liggett}
T.~M. Liggett.
\newblock An improved subadditive ergodic theorem.
\newblock {\em Ann. Probab.}, 13(4):1279--1285, 1985.

\bibitem{Lin}
H.~Lin and T.~Sepp{\"a}l{\"a}inen.
\newblock Properties of the limit shape for some last-passage growth models in
  random environments.
\newblock {\em Stochastic Process. Appl.}, 122(2):498--521, 2012.

\bibitem{MairessePrabhakar}
J.~Mairesse and B.~Prabhakar.
\newblock The existence of fixed points for the {$\cdot/GI/1$} queue.
\newblock {\em Ann. Probab.}, 31(4):2216--2236, 2003.

\bibitem{Marchand}
R.~Marchand.
\newblock Strict inequalities for the time constant in first passage
  percolation.
\newblock {\em Ann. Appl. Probab.}, 12(3):1001--1038, 2002.

\bibitem{Martin2}
J.~B. Martin.
\newblock Limiting shape for directed percolation models.
\newblock {\em Ann. Probab.}, 32(4):2908--2937, 2004.

\bibitem{Martin}
J.~B. Martin.
\newblock Last-passage percolation with general weight distribution.
\newblock {\em Markov Process. Related Fields}, 12(2):273--299, 2006.

\bibitem{Muth}
E.~J. Muth.
\newblock The reversibility property of production lines.
\newblock {\em Management Sci.}, 25(2):152--158, 1979/80.

\bibitem{Okounkov}
A.~Okounkov.
\newblock Infinite wedge and random partitions.
\newblock {\em Selecta Math. (N.S.)}, 7(1):57--81, 2001.

\bibitem{Rost}
H.~Rost.
\newblock Nonequilibrium behaviour of a many particle process: density profile
  and local equilibria.
\newblock {\em Z. Wahrsch. Verw. Gebiete}, 58(1):41--53, 1981.

\bibitem{Rudin}
W.~Rudin.
\newblock {\em Real and complex analysis}.
\newblock McGraw-Hill Book Co., New York, third edition, 1987.

\bibitem{Seppalainen2}
T.~Sepp{\"a}l{\"a}inen.
\newblock Hydrodynamic scaling, convex duality and asymptotic shapes of growth
  models.
\newblock {\em Markov Process. Related Fields}, 4(1):1--26, 1998.

\bibitem{Seppalainen4}
T.~Sepp{\"a}l{\"a}inen.
\newblock Directed random growth models on the plane.
\newblock In {\em Analysis and stochastics of growth processes and interface
  models}, pages 9--38. Oxford Univ. Press, Oxford, 2008.

\bibitem{Seppalainen}
T.~Sepp{\"a}l{\"a}inen.
\newblock Lecture notes on the corner growth model.
\newblock \url{http://www.math.wisc.edu/~seppalai/cornergrowth-book/ajo.pdf},
  2009.

\bibitem{KrugSepp}
T.~Sepp{\"a}l{\"a}inen and J.~Krug.
\newblock Hydrodynamics and platoon formation for a totally asymmetric
  exclusion model with particlewise disorder.
\newblock {\em J. Statist. Phys.}, 95(3-4):525--567, 1999.

\bibitem{Shaked}
M.~Shaked and J.~G. Shanthikumar.
\newblock {\em Stochastic orders}.
\newblock Springer Series in Statistics. Springer, New York, 2007.

\end{thebibliography}

\end{document}